\title{Building Large Free Subshifts Using the Local Lemma}
\date{}
\author{Anton~Bernshteyn}
\address{Department of Mathematics, University of Illinois at Urbana--Champaign, IL, USA}
\email{bernsht2@illinois.edu}
\thanks{This research is supported in part by the Waldemar J., Barbara G., and Juliette Alexandra Trjitzinsky Fellowship.}
\newtheoremstyle{bfnote}%
{}{}%
{\slshape}{}%
{\bfseries}{\bfseries.}%
{ }%
{\thmname{#1}\thmnumber{ #2}\thmnote{ \ep{\normalfont{}#3}}}
\newtheoremstyle{defbfnote}%
{}{}%
{}{}%
{\bfseries}{.}%
{ }%
{\thmname{#1}\thmnumber{ #2}\thmnote{ (#3)}}
\newtheoremstyle{claim}%
{}{}%
{\slshape}{}%
{\itshape}{.}%
{ }%
{\thmname{#1}\thmnumber{ #2}\thmnote{ \ep{\normalfont{}#3}}}
\theoremstyle{bfnote}
\newtheorem{theo}{Theorem}[section]
\newtheorem{prop}[theo]{Proposition}
\newtheorem{lemma}[theo]{Lemma}
\newtheorem{corl}[theo]{Corollary}
\newtheorem{claim}[theo]{Claim}
\newcommand*{\myproofname}{Proof}
\theoremstyle{definition}
\newtheorem{ques}[theo]{Question}
\newtheorem*{remk*}{Remark}
\theoremstyle{remark}
\newtheorem*{ques*}{Question}
\newcommand{\0}{\varnothing}
\newcommand{\set}[1]{\{#1\}}
\newcommand{\dom}{\mathrm{dom}}
\newcommand{\acts}{\curvearrowright}
\newcommand{\pacts}{\,\tilde{\acts}\,}
\newcommand{\N}{\mathbb{N}}
\renewcommand{\U}{\mathscr{U}}
\renewcommand{\epsilon}{\varepsilon}
\renewcommand{\phi}{\varphi}
\renewcommand{\theta}{\vartheta}
\renewcommand{\leq}{\leqslant}
\renewcommand{\geq}{\geqslant}
\newcommand{\symdif}{\bigtriangleup}
\newcommand{\fins}[1]{[#1]^{<\infty}}
\newcommand{\finf}[2]{[#1 \to #2]^{<\infty}}
\renewcommand{\G}{\Gamma}
\newcommand{\B}{\mathscr{B}}
\newcommand{\defeq}{\coloneqq}
\newcommand{\rest}[2]{{{#1}\vert_{#2}}}
\newcommand{\emphd}[1]{{\fontseries{b}\selectfont\textsf{#1}}}
\newcommand{\St}{{\mathrm{St}}}
\renewcommand{\d}{\mathfrak{d}}
\renewcommand{\b}{\mathfrak{b}}
\newcommand{\w}{\mathfrak{w}}
\newcommand{\Prop}{\mathrm{Prop}}
\newcommand{\Ball}{\mathrm{B}}
\newcommand{\Col}[4]{{\mathrm{Col}_{#1,#2}(#4, #3)}}
\newcommand{\Forb}{\mathrm{Forb}}
\newcommand{\Nbhd}{\mathrm{N}}
\newcommand{\bemph}[1]{{\normalfont#1}} 
\newcommand{\ep}[1]{\bemph{(}#1\bemph{)}} 
\numberwithin{equation}{section}
\patchcmd{\subsection}{\normalfont}{\itshape\bfseries}{}{}
\def\@seccntformat#1{%
	\protect\textup{%
		\protect\@secnumfont
		\expandafter\protect\csname format#1\endcsname 
		\csname the#1\endcsname
		\protect\@secnumpunct
	}%
}
\newcommand{\neutralize}[1]{\expandafter\let\csname c@#1\endcsname\count@}
\begin{document}
	
	\maketitle
	
	\begin{abstract}
		Gao, Jackson, and Seward~\cite{GJS1} proved that every countably infinite group $\G$ admits a nonempty free subshift $X \subseteq 2^\G$. Here we strengthen this result by showing that free subshifts can be ``large'' in various senses. Specifically, we prove that for any $k \geq 2$ and $h < \log_2 k$, there exists a free subshift $X \subseteq k^\G$ of Hausdorff dimension and, if $\G$ is sofic, entropy at least $h$, answering two questions attributed by Gao, Jackson, and Seward to Juan Souto \cite[Problems~11.2.4 and 11.2.5]{GJS2}. Furthermore, we establish a general lower bound on the largest ``size'' of a free subshift $X'$ contained inside a given subshift $X$. A central role in our arguments is played by the Lov\'asz Local Lemma, an important tool in probabilistic combinatorics, whose relevance to the problem of finding free subshifts was first recognized by Aubrun, Barbieri, and Thomass\'e~\cite{ABT}.
	\end{abstract}
	
	\section{Introduction}
	
	
	\noindent We use $\N$ to denote the set of all nonnegative integers and identify each $k \in \N$ with the set $\set{i \in \N \,:\, i < k}$. Countable sets are always assumed to carry discrete topologies. Throughout, $\G$ is a countably infinite group with identity element $\mathbf{1}$. For a set~$A$, the \emphd{shift action} $\G \acts A^\G$ on the set of all maps $x \colon \G \to A$ is defined via
	\[
	(\gamma \cdot x)(\delta) \defeq x(\delta \gamma) \qquad \text{for all } \gamma,\,\delta \in \G \text{ and } x \in A^\G.
	\]
	Occasionally, we will also have to consider the \emphd{right shift action} $A^\G \curvearrowleft \G$, defined similarly by
	\[
	(x \cdot \gamma)(\delta) \defeq x(\gamma\delta) \qquad \text{for all } \gamma,\,\delta \in \G \text{ and } x \in A^\G.
	\]
	Whenever we refer to the shift action, for instance when talking about shift-invariant sets, the \emph{left} shift action must be understood, unless explicitly stated otherwise. The main reason to invoke the right shift action is that the map $x \mapsto \St_\G(x)$ that associates to each point $x \in A^\G$ its stabilizer under the \emph{left} shift action is \emph{right}-shift-invariant; in other words, for all $\gamma \in \G$ and $x \in A^\G$, 
	\[
		\gamma \cdot x = x \,\Longrightarrow\, \gamma \cdot (x \cdot \delta) = x \cdot \delta \text{ for every } \delta \in \G.
	\]
	  If $A$ is a topological space, then the shift action $\G \acts A^\G$ is continuous with respect to the product topology. When $A = k \in \N$, the product space $k^\G$ is totally disconnected, compact, and metrizable. 
	
	A shift-invariant closed subset $X \subseteq k^\G$ is called a \emphd{subshift}. A subshift $X$ is \emphd{minimal} if $X \neq \0$ and there is no subshift $Y$ such that $\0 \neq Y \varsubsetneq X$.
	A subshift $X$ is \emphd{free} if the induced action $\G \acts X$ is free, i.e., if the stabilizer of every point $x \in X$ is trivial:
	\[
		\gamma \cdot x = x \,\Longrightarrow\, \gamma = \mathbf{1} \qquad \text{for all } \gamma \in \G \text{ and } x \in X. 
	\]
	Glasner and Uspenskij \cite[Problem~6.2]{GU} asked if every countable group admits a non\-emp\-ty free subshift and gave a positive answer for groups that are either Abelian or residually finite~\cite[Theorem~5.1]{GU}. Somewhat earlier, Dranishnikov and Schroeder~\cite[Theorem~2]{DS} reached the same conclusion for torsion\-/free hyperbolic groups. The problem was finally resolved in a tour de force by Gao, Jackson, and Seward~\cite{GJS1, GJS2}, who showed that not only do nonempty free subshifts exist for all groups, but they are rather numerous: For any $k \geq 2$, every nonempty shift\-/invariant open subset $U\subseteq k^\G$ contains continuumly many pairwise disjoint non\-emp\-ty free subshifts~\cite[Theorem~1.4.1]{GJS2}.
	
	
	Seward and Tucker-Drob~\cite{T-DS} further developed the techniques of \cite{GJS1, GJS2} in order to establish the following very strong result: If $\G \acts X$ is a free Borel action of $\G$ on a standard Borel space $X$, then there exists an equivariant Borel map $\pi \colon X \to 2^\Gamma$ such that $\overline{\pi(X)}$ is a free subshift~\cite[Theorem~1.1]{T-DS}. (Here, and in what follows, a horizontal line indicates topological closure.) This in particular implies that every countable group admits a free subshift \emph{with an invariant probability measure}, which answers a question raised by Gao, Jackson, and Seward~\cite[Problem~11.2.6]{GJS2}. Indeed, if the action $\G \acts X$ preserves a probability measure $\mu$, then the pushforward $\pi_\ast(\mu)$ is an invariant probability measure on $\overline{\pi(X)}$.
	

	
	For the rest of this article, fix an integer $k \geq 2$. We study how ``large,'' in various senses, a free subshift $X \subseteq k^\G$ can be. Specifically, we investigate the following questions, which are attributed by Gao, Jackson, and Seward to Juan Souto: 
	
	\begin{ques}[{\cite[Problem~11.2.5]{GJS2}}]\label{ques:Haus}
		For a given group $\G$, what is the largest possible Hausdorff dimension of a free subshift $X \subseteq k^\G$?
	\end{ques}
	
	\begin{ques}[{\cite[Problem~11.2.4]{GJS2}}]\label{ques:ent}
		For groups $\G$ in which a notion of entropy exists, what is the largest possible entropy of a free subshift $X \subseteq k^\G$?
	\end{ques}
	
	The notions of Hausdorff dimension and \ep{topological} entropy are reviewed in Section~\ref{sec:prelim}. To date, the largest class of groups for which a well-developed theory of entropy exists is formed by the so-called \emph{sofic} groups. Entropy for measure-preserving actions of sofic groups was introduced by Bowen~\cite{Bow0} and then extended to the topological setting by Kerr and Li~\cite{KerrLi}. For the smaller class of \emph{amenable} groups, entropy was introduced earlier by Keiffer~\cite{Keiffer} (with important further developments by Ornstein and Weiss~\cite{OrnsteinWeiss}) and is somewhat better behaved. Both the Hausdorff dimension and, if $\G$ is sofic, the entropy of $k^\G$ are equal to $\log_2 k$. 
	We answer Questions~\ref{ques:Haus} and \ref{ques:ent} by showing that 
	the Hausdorff dimension and, if $\G$ is sofic, the entropy of a free subshift can be made \emph{arbitrarily close} to this upper bound:
	
	\begin{theo}\label{theo:corl}
		{Let $U \subseteq k^\G$ be a nonempty shift-invariant open set. Then, for any $h < \log_2 k$:}
		\begin{enumerate}[label=\normalfont\ep{\roman*},align=left,labelindent=\parindent,leftmargin=*,itemsep=\parskip]
			\item {there exists a free minimal subshift $X \subseteq U$ of Hausdorff dimension at least~$h$;}
			
			\item {if $\G$ is amenable, then there exists a free minimal subshift $X \subseteq U$ of entropy at least~$h$;}
			
			\item {if $\G$ is sofic, then there exists a free subshift $X \subseteq U$ whose entropy with respect to any sofic approximation is at least $h$.}
		\end{enumerate}
	\end{theo}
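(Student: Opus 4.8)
The plan is to deduce all three parts from one construction: first replace $U$ by a concrete subshift $Y \subseteq U$ of near\-/maximal size, and then carve a free (and, in cases (i)--(ii), minimal) subshift out of $Y$ using the Lov\'asz Local Lemma in a form suited to shift spaces, following the strategy introduced by Aubrun, Barbieri, and Thomass\'e~\cite{ABT} but keeping quantitative control of the complexity.

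\emph{Step 1: a large subshift inside $U$.} Since $U$ is nonempty and open it contains a basic cylinder, and after translating (using shift\-/invariance) we may assume there are a finite set $F \subseteq \G$ with $\mathbf 1 \in F$ and a pattern $w \colon F \to k$ with $\set{x \in k^\G \,:\, x|_F = w} \subseteq U$. By shift\-/invariance, every point containing at least one shifted copy of $w$ lies in $U$. Fix a large parameter $R$ and a finite symmetric set $T \subseteq \G$ with $FF^{-1} \cup F^{-1}F \subseteq T$ and $|T| \geq R$, and let $Y = Y_R$ consist of all $x \in k^\G$ in which copies of $w$ occur $T$\-/syndetically --- precisely, $\set{\delta \in \G \,:\, (\delta \cdot x)|_F = w}$ meets $T\gamma$ for every $\gamma \in \G$. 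A routine maximal\-/separated\-/set (net) argument shows $Y$ is a nonempty subshift, and $Y \subseteq U$ by the previous sentence. Since a $T$\-/syndetic family of copies of $w$ pins down only an $O(|F|/|T|)$\-/fraction of the coordinates of any point, the number of $Y$\-/patterns on any sufficiently large finite set $E$ is at least $k^{(1-\epsilon_R)|E|}$ with $\epsilon_R \to 0$; this estimate does not care whether $E$ ranges over a fixed increasing exhaustion of $\G$, over F\o lner sets (when $\G$ is amenable), or over the model spaces of a sofic approximation (when $\G$ is sofic). Hence the Hausdorff dimension of $Y$, its entropy when $\G$ is amenable, and its entropy with respect to every sofic approximation when $\G$ is sofic all tend to $\log_2 k$ as $R \to \infty$; fix $R$ so large that each is $> h' \defeq (h + \log_2 k)/2$.

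\emph{Step 2: a free subshift inside $Y$.} Sample a random $x \in Y$ by choosing a maximal $T$\-/separated ``planting set'' $S \subseteq \G$ in a shift\-/equivariant fashion, writing $w$ at every translate indexed by $S$, and filling the remaining coordinates i.i.d.\ uniformly from $k$; let $\nu$ be the resulting shift\-/invariant measure, which is supported on $Y$. For each scale $n \in \N$ and each $\delta \in \G$, let $B_{n,\delta}$ be the clopen event that $x$ restricted to a window $\delta W_n$ admits a nontrivial period, where $W_1 \subseteq W_2 \subseteq \cdots$ is a fixed exhaustion of $\G$; equivalently, $x$ and $\gamma \cdot x$ agree on a large sub\-/window for some $\mathbf 1 \neq \gamma \in W_n W_n^{-1}$. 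Because the free coordinates are plentiful, $\nu(B_{n,\delta})$ decays exponentially in $|W_n|$, which dominates the polynomially growing degree of the dependency graph on $\set{B_{n,\delta}}$, so a weighted Local Lemma with weights decaying in $n$ gives $\nu\big(\bigcap_{n,\delta} B_{n,\delta}\c\big) > 0$. The set $X \defeq \bigcap_{n,\delta} B_{n,\delta}\c$ is then a nonempty subshift with $X \subseteq Y \subseteq U$, and every $x \in X$ has no nontrivial period in any window, hence $\gamma \cdot x \neq x$ for all $\gamma \neq \mathbf 1$: thus $X$ is free. Finally, a conditional (``lopsided'') version of the same argument shows that any pattern on a finite set $E$ that is itself period\-/free and of the planted\-/then\-/free form --- all but a $k^{-\Omega(|E|)}$\-/fraction of the $k^{(1-\epsilon_R)|E|}$ patterns of Step 1 --- extends to a point of $X$; hence $X$ has Hausdorff dimension (resp.\ entropy, resp.\ sofic entropy) at least $h' - o_R(1) \geq h$, which proves (iii).

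\emph{Step 3 (minimality) and the main obstacle.} For (i) and (ii) one instead applies the Local Lemma to produce a single point $x^\ast \in Y$ that is uniformly aperiodic as above and additionally uniformly recurrent --- the latter enforced by extra bad events asserting that a fixed marker pattern fails to occur within a prescribed window --- and, by weaving in a marker (``blueprint'') structure in the style of Gao, Jackson, and Seward~\cite{GJS1,GJS2}, one arranges that $\overline{\G \cdot x^\ast}$ realizes all but a negligible fraction of the patterns of Step 1; then $X \defeq \overline{\G \cdot x^\ast}$ is minimal by Furstenberg's criterion, free, contained in $U$, and of size $\geq h$. The crux of the whole argument is Step 2: one must choose the windows $W_n$, the planting set $S$, and the measure $\nu$ so that the bad events are simultaneously rare and of bounded\-/enough dependence --- the delicate point being that a period $\gamma$ that respects the pattern of $S$ contributes no ``free'' disagreeing coordinates, so $S$ itself must be chosen aperiodic --- and so that the conditional Local Lemma still controls the pattern count, uniformly across all three notions of size; the sofic case is the most demanding, since the complexity estimate must hold for every sofic approximation at once. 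Incorporating minimality without sacrificing complexity (Step 3) is the additional, by now fairly standard, difficulty.
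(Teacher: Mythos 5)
Your plan is LLL-based in the same spirit as the paper's, but it is a genuinely different and substantially harder route, and as written it has real gaps. You try to enforce freeness \emph{directly}, by introducing a two-parameter family of ``period'' bad events $B_{n,\delta}$ plus an ad hoc planting/marker layer for minimality; the paper instead introduces a pointwise width invariant
\[
  \w^\ast(X) = \inf_{x \in X}\min\bigl\{\w(\overline{\G \cdot x}),\,\w(\overline{x\cdot\G})\bigr\}
\]
and observes (Proposition~\ref{prop:comparison}) that \emph{any} subshift with $\w^\ast > (1/2)\log_2 k$ is automatically free, and that $\w^\ast$ is monotone downward, so that \emph{every} minimal subshift $Y$ of an $X$ with $\w^\ast(X)\geq h$ still has $\w^\ast(Y)\geq h$ and hence the desired Hausdorff dimension and (amenable) entropy. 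This makes your entire Step 3 --- the blueprint/marker machinery à la GJS that you flag as ``the additional difficulty'' --- unnecessary, and it also makes your Step 1 subshift $Y_R$ unnecessary: the paper applies its main theorem to all of $k^\G$ and deduces $X\subseteq U$ from $\w^\ast(X) > \w(k^\G\setminus U)$, rather than building a large subshift inside $U$ in advance.

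Concretely, the following points in your plan are gaps rather than routine omissions.
(1) \emph{The planting set.} You acknowledge that a period respecting the pattern of $S$ contributes no disagreeing free coordinates and say ``so $S$ itself must be chosen aperiodic,'' but producing such an $S$ equivariantly is essentially the problem one is trying to solve. The paper avoids this entirely, because freeness is a consequence of a size threshold, not a condition imposed during the LLL.
(2) \emph{Quantitative pattern counting.} To get Hausdorff dimension or entropy $\geq h$ you need a lower bound on $|X_E|$, not merely $\nu(X)>0$. You invoke ``a conditional (lopsided) version of the same argument,'' but the event ``$x\in X$'' depends on the whole configuration, not on finitely many coordinates, so the fraction of patterns on $E$ that extend to a point of $X$ is not controlled by a standard conditional LLL. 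The paper instead uses the quantitative form of the LLL (Corollary~\ref{corl:LLL}) applied to a \emph{finite} forbidden-pattern set pulled back to a finite model.
(3) \emph{The sofic case.} You correctly identify it as the most demanding but leave it entirely unaddressed. One must bound, uniformly over sofic approximations $\alpha\colon\G\pacts V$, the number of $(\epsilon,F)$-approximate $X$-colorings; the paper does this by transporting the forbidden patterns $\Phi$ to a set $\Phi_\alpha\subseteq\finf{V}{k}$ over the finite model and re-running the LLL there (Claims~\ref{claim:col} and~\ref{claim:tau}), including an injectivity argument for dependency neighborhoods that has no analogue in your sketch.
(4) \emph{Measure vs. existence.} You want $\nu(\bigcap B_{n,\delta}\c)>0$ for a specific invariant $\nu$ and simultaneously an equivariant pointwise construction; the paper carefully separates these, using a measurable LLL for group actions (Theorem~\ref{theo:meas_LLL}) to produce the invariant measure and factor map.

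In short, the paper's route is to replace all the delicate ``avoid periodicity while counting patterns'' bookkeeping by two invariants ($\b$ and $\w^\ast$), prove once that large breadth yields large pointwise width and positive sofic entropy, and then read off freeness, minimality, containment in $U$, and the size bounds as formal consequences. Your plan would, if every acknowledged ``delicate point'' were fully executed, likely also work, but it re-derives the GJS-style marker machinery that the paper was specifically designed to bypass, and the entropy estimates are currently asserted rather than proved.
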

	
	The main ingredient in our proof of Theorem~\ref{theo:corl} is the so-called \emph{Lov\'asz Local Lemma} (the~\emph{LLL} for short), a powerful tool in probabilistic combinatorics that is often used to prove existence results. A brief review of the~LLL is given in \S\ref{subsec:LLL}. Although the original proof of \cite[Theorem~1.4.1]{GJS2} due to Gao, Jackson, and Seward is quite technical, Aubrun, Barbieri, and Thomass\'e~\cite{ABT} later employed the~LLL to find a simple alternative construction of a nonempty free subshift $X \subseteq 2^\G$ for an arbitrary group $\G$. Elek~\cite{E}, following an approach based on nonrepetitive graph colorings and inspired by~\cite{AGHR}, obtained a new proof that there exist free subshifts with invariant probability measures under the assumptions that $\G$ is finitely generated and sofic; Elek's argument also relies heavily on the~LLL.
	
	The main result of this article is Theorem~\ref{theo:main}, of which Theorem~\ref{theo:corl} is a simple special case. We state Theorem~\ref{theo:main} in Section~\ref{sec:main_result} after introducing some necessary definitions. In the remainder of this introduction we give a brief informal overview of the statement of Theorem~\ref{theo:main} without being precise about the technical details.
	
	In this paper, we work with five notions of size for subshifts: Hausdorff dimension, entropy (with the cases of amenable and general sofic groups treated somewhat differently), width, pointwise width, and breadth. The former two are standard and reviewed in Section~\ref{sec:prelim}, while the latter three are defined in Section~\ref{sec:main_result} and are crucial for the statement of Theorem~\ref{theo:main}.
	
	The \emph{width} $\w(X)$ of a subset $X \subseteq k^\G$ is defined in a way that is quite similar to the definition of Hausdorff dimension, with the advantage of not requiring to choose a metric. It is not hard to see that for a subshift $X$, $\w(X)$ is a lower bound for the Hausdorff dimension and, if $\G$ is amenable, the entropy of $X$ (see Proposition~\ref{prop:comparison}\ref{item:Hausdorff},\ref{item:amenable}). The \emph{pointwise width} of a set $X$ is defined via
	\[
		\w^\ast(X) \defeq  \inf_{x \in X} \min \set{\w(\overline{\G \cdot x}), \, \w(\overline{x \cdot \G})}.
	\]
	If $X$ is a nonempty subshift with $\w^\ast(X) \geq h$, then we have $\w(X) \geq h$, and, moreover, $\w(Y) \geq h$ for {all} nonempty subshifts $Y \subseteq X$. Furthermore, if $\w^\ast(X)$ is sufficiently large, namely strictly higher than $(1/2)\log_2 k$, then $X$ must be \emph{free} (see Proposition~\ref{prop:comparison}\ref{item:free}). In view of these considerations, finding subshifts of large pointwise width becomes our primary objective.
	
	The last notion of size for a subshift $X$ that we introduce (and the last ingredient needed for the statement of our main result) is its \emph{breadth} $\b(X)$. The definition of breadth is directly informed by the requirements of the~LLL. In contrast to the other notions, it estimates the size of $X$ in a somewhat roundabout way: by measuring how ``small'' one can make a family of open sets whose translates cover the \emph{complement} of $X$. The main advantage of breadth is that it is usually easy to bound from below, since a lower bound on $\b(X)$ can be witnessed by a \emph{single} family $\U$ of open sets.
	
	Now we can state our main result:
	
	{
		\renewcommand{\thetheo}{\ref{theo:main}}
		\begin{theo}
			Let $X \subseteq k^\G$ be a subshift such that $\b(X) > 0$. Then $\w(X) \geq \b(X)$; moreover, for any $h < \b(X)$, there exists a nonempty subshift $X' \subseteq X$ with the following properties:
			\begin{enumerate}[label=\normalfont\ep{\roman*},align=left,labelindent=\parindent,leftmargin=*, itemsep=\parskip]
				\item the pointwise width of $X'$ is at least $h$;
				\item\label{item:main:sofic} if $\G$ is sofic, then the entropy of $X'$ with respect to any sofic approximation is at least~$h$;
				\item there exist an invariant probability measure $\mu$ on $X'$ and a factor map \[\pi \colon ([0;1]^\G, \lambda^\G) \to (X', \mu).\]
			\end{enumerate}
		\end{theo}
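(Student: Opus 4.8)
The plan is to realize the desired subshift $X'$ as the solution set of a $\G$-invariant constraint satisfaction problem over the alphabet $k$, engineered so that the Lov\'asz Local Lemma (\S\ref{subsec:LLL}) applies, and then to extract (i)--(iii) from three different incarnations of that lemma. Fix reals $h < h' < h'' < \b(X)$. Unwinding the definition of breadth from Section~\ref{sec:main_result}, fix a witness for $\b(X) > h''$: in essence, a family $\U$ of clopen cylinders, each over a finite subset of $\G$, whose $\G$-translates cover $k^\G \setminus X$ and whose aggregate ``cost'' is at most $\log_2 k - h''$. Also fix a rapidly growing exhaustion $\mathbf{1} \in F_0 \subseteq F_1 \subseteq \cdots$ with $\bigcup_n F_n = \G$. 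Let $X'$ consist of all $x \in k^\G$ such that \ep{a} $\gamma^{-1}\cdot x \notin U$ for every $\gamma \in \G$ and $U \in \U$, and \ep{b} for every $\gamma \in \G$ and every admissible pair of scales $m < n$ (with $|F_n|$ far exceeding $|F_m|\,2^{h'|F_m|}$), each of the windows $\rest{x}{F_n\gamma}$ and $\rest{x}{\gamma F_n}$ realizes more than $2^{h'|F_m|}$ distinct $F_m$-subpatterns $\bigl(\delta \mapsto x(\delta\gamma')\bigr)$, respectively $\bigl(\delta \mapsto x(\gamma'\delta)\bigr)$, as $\gamma'$ ranges over the relevant translates. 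Condition \ep{a} is precisely the statement that $x \in X$ (since the translates of $\U$ cover the complement of $X$), while \ep{b} forbids a set of finite patterns; thus $X'$ is a subshift and $X' \subseteq X$.

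Granting that $X' \neq \0$ (shown below), part~(i) follows at once: if $x \in X'$, then for each $m$ the orbit closure $\overline{\G\cdot x}$ realizes more than $2^{h'|F_m|}$ patterns on $F_m$ (apply \ep{b} inside any large enough window), so $\w(\overline{\G\cdot x}) \geq h' > h$, and symmetrically $\w(\overline{x\cdot\G}) \geq h$; hence $\w^\ast(X') \geq h$. Since $X' \subseteq X$ and $\w$ is monotone under inclusion of subshifts, this moreover gives $\w(X) \geq \w(X') \geq h$ for every $h < \b(X)$ (invoking Proposition~\ref{prop:comparison} to pass from $\w^\ast$ to $\w$), whence $\w(X) \geq \b(X)$.

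For the three size statements we run the LLL on the above CSP with each $x(\gamma)$ independent and uniform over $k$; the bad events are the negations of \ep{a} and \ep{b}. The event ``$\gamma^{-1}\cdot x \in U$'' has probability the normalized size of $U$, and these contribute total weight at most $\log_2 k - h''$ per vertex; the event that a window $\rest{x}{F_n\gamma}$ violates the richness demand of \ep{b} can be made as small as we wish --- in particular summable over all scales --- by taking $|F_n|$ large compared to $|F_m|\,2^{h'|F_m|}$, since then a uniformly random pattern on $F_n\gamma$ realizes more than $2^{h'|F_m|}$ distinct $F_m$-subpatterns with overwhelming probability (the union bound over small candidate subpattern families being easily absorbed). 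The dependencies are finite-range at each scale, so with $(F_n)$ growing fast enough the asymmetric LLL inequality holds with slack to spare --- this is where $h' < h''$ is used. Now: (i) the ordinary LLL together with compactness of $k^\G$ gives $X' \neq \0$; (ii) for any sofic approximation $\Sigma = (\sigma_i \colon \G \to \mathrm{Sym}(n_i))_i$, transporting (for large $i$) all constraints of scale below $\epsilon_i n_i$ to $[n_i]$ via $\sigma_i$ yields a finite CSP still satisfying the LLL inequality, so its ``counting'' form produces at least $k^{n_i}\prod_C(1-x_C) \geq 2^{(h-o(1))n_i}$ valid colorings $[n_i] \to k$, each of which --- by locality of the constraints --- is a $\Sigma$-microstate for $X'$, so the topological sofic entropy of $X'$ with respect to $\Sigma$ is at least $h$; (iii) running a $\G$-equivariant Moser--Tardos-type resampling procedure that draws its randomness from $\omega \in [0;1]^\G$ with $\lambda^\G$, the slack in the LLL inequality forces termination $\lambda^\G$-almost surely and yields a $\G$-equivariant Borel map $\pi \colon [0;1]^\G \to X'$; then $\mu \defeq \pi_\ast(\lambda^\G)$ is a $\G$-invariant Borel probability measure on $X'$ and $\pi \colon ([0;1]^\G, \lambda^\G) \to (X', \mu)$ is a factor map.

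The crux is the design of the scale-\ep{b} constraints. They must at once: be cheap enough for the LLL to close given only $\b(X) > h$ (there is little room to waste); genuinely force \emph{positive} pointwise width --- which excludes subshifts of finite type, since those carry periodic points and hence have pointwise width $0$, so infinitely many constraint scales are unavoidable; and yet remain ``locally finite at each scale'' and symmetric in the two shifts, so as to persist into every point of the relevant orbit closures, into the counting argument on sofic approximations, and through the resampling procedure. Building an infinitary constraint scheme that the finite-range LLL machinery can still digest --- on a sufficiently fast-growing exhaustion $(F_n)$ --- is where the real work lies; the rest is a careful but essentially routine verification of the LLL inequality combined with standard compactness, Moser--Tardos, and sofic-entropy arguments.
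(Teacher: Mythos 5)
The weak link is the step ``part~(i) follows at once.'' You argue that constraint~(b) guarantees, for every $x \in X'$ and every $m$, that $\overline{\G \cdot x}$ realizes more than $2^{h'|F_m|}$ distinct $F_m$-patterns, and conclude from this that $\w(\overline{\G \cdot x}) \geq h'$. That implication is false: having many $F_m$-patterns gives an \emph{upper} bound on width (this is exactly Proposition~\ref{prop:comparison}\ref{item:X_F/F}, $\log_2|X_F|/|F| \geq \w(X)$), not a lower bound. Width is an infimum over covers by \emph{arbitrary} basic open cylinders, not just cylinders over the $F_m$, and a subshift can be pattern\=/rich on every $F_m$ while still admitting very cheap covers over other shapes. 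A concrete counterexample with $k=2$, $\G = \Z$, $F_n = \{-n,\ldots,n\}$: let $X$ consist of all $x$ that are constant on $2\Z$ or constant on $2\Z+1$. Then $\log_2|X_{F_n}|/|F_n| \to 1/2$, so $X$ satisfies your richness demand for any $h' < 1/2$, yet covering each half of $X$ by the two cylinders over $\{0,2,\ldots,2N\}$ (respectively $\{1,3,\ldots,2N+1\}$) shows $\rho_h(\U_N) = 4\cdot 2^{-(N+1)h}$, so $\w(X) = 0$; in particular $X$ has periodic points, so $\w^\ast(X) = 0$. Thus your constraint~(b) does not force positive pointwise width, and the subshift $X'$ you construct need not satisfy conclusion~(i) --- nor even be free.

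The paper's Lemma~\ref{lemma:pointwise} closes this gap by attacking width head-on rather than through pattern counts: it enumerates \emph{every} finite family $\mathscr{F}_n$ of basic open sets with $\w(\mathscr{F}_n) < h$, and for each one adds to the forbidden list a family $\mathscr{V}_n$ (and a right-shift analogue $\mathscr{W}_n$) of cylinders, each the intersection $\bigcap_{i=1}^N \gamma_i^{-1}\cdot U_i$ of $N$ widely separated translates of members of $\mathscr{F}_n$. Avoiding $\bigcup\mathscr{V}_n$ guarantees $\G\cdot x \not\subseteq \bigcup\mathscr{F}_n$ (Claim~\ref{claim:additions}), and by compactness any $x$ with $\w(\overline{\G\cdot x}) < h$ would have its orbit covered by some $\mathscr{F}_n$; so the resulting $X'$ genuinely has $\w^\ast(X') \geq h$. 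The LLL cost $\sigma_h(\mathscr{V}_n)$ can be made arbitrarily small because it is controlled by $N\cdot\rho_h(\mathscr{F}_n)^N \to 0$, using $\rho_h(\mathscr{F}_n) < 1$ --- this is the ``cheapness'' you correctly anticipated is needed, but the paper earns it by a different mechanism. Your treatment of (ii) via pulled-back LLL counting on sofic approximations and of (iii) via the measurable (Moser--Tardos-style) LLL over $([0;1]^\G,\lambda^\G)$ matches the paper in outline, but both rest on the same flawed constraint scheme; to repair the argument you would need to replace~(b) with the forbidden-cheap-cover constraints and rerun all three limbs with those.
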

		\addtocounter{theo}{-1}
	}
	
	
	Theorem~\ref{theo:corl} easily follows from Theorem~\ref{theo:main} since $\b(k^\G) = \log_2 k$; the details are given in~\S\ref{subsec:proof_corl}.
	
	\subsection*{{Acknowledgements}}
	
	I am very grateful to Alexander Kechris, Robin Tucker-Drob, Benjamin Weiss, and the anonymous referee for their helpful comments and to Anush Tserunyan for insightful discussions.
	
	\section{Preliminaries}\label{sec:prelim}
	
	
	\subsection{{Basic definitions and notation}}\label{subsec:prelim}
	
	For convenience, we identify a function $f$ with its graph, i.e., with the set $\set{(x, y) \,:\, f(x) = y}$. This enables the use of standard set-theoretic notation, such as $\cup$, $|\cdot|$, $\subseteq$, etc., for functions. 
	The restriction of a function $f$ to a set $S$ is denoted by $\rest{f}{S}$. 
	Given sets $X$ and $Y$, we use $\fins{X}$ to denote the set of all finite subsets of $X$ and $\finf{X}{Y}$ to denote the set of all functions $\phi \colon F \to Y$ with $F \in \fins{X}$.
	The shift action $\G \acts k^\G$ naturally extends to an action $\G \acts \finf{\G}{k}$; specifically, for all $\phi \in \finf{\G}{k}$ and $\gamma \in \G$, let \[\dom(\gamma \cdot \phi) \defeq \dom(\phi) \gamma^{-1} \qquad \text{and} \qquad (\gamma \cdot \phi)(\delta) \defeq \phi(\delta \gamma) \text{ for all } \delta \in \dom(\gamma \cdot \phi)
	.\]
	The right shift action $\finf{\G}{k} \curvearrowleft \G$ is defined similarly in the obvious way.
	The topology on the space~$k^\G$ is generated by the \emphd{basic} open sets of the form
	\[
		U_\phi \defeq \set{x \in k^\G \,:\, x \supset \phi}, \qquad \text{where} \qquad \phi \in \finf{\G}{k} \setminus \set{\0}.
	\]
	Observe that each basic open set is also closed. 
	Note that the space $k^\G$ itself is \emph{not} a basic open set (this convention will simplify some of our definitions later). For $X \subseteq k^\G$ and $F \in \fins{\G}\setminus \set{\0}$, let
	\[
		X_F \defeq \set{\phi \in k^F \,:\, X \cap U_\phi \neq \0}= \set{\rest{x}{F} \,:\, x \in X}.
	\]
	We use $\lambda$ to denote the Lebesgue probability measure on the unit interval $[0;1]$.
	
	\subsection{{Hausdorff dimension}}\label{subsec:Hausdorff}
	
	To define Hausdorff dimension, we must first fix a metric on~$k^\G$. To that end, let $\gamma_0$, $\gamma_1$, \ldots{} be an arbitrary enumeration of the elements of $\G$. For distinct $x$, $y \in k^\G$, let
	\[
	\operatorname{dist}(x,y) \defeq 2^{-n}, \text{ where } n \defeq \min \set{i \in \N \,:\, x(\gamma_i) \neq y(\gamma_i) }.
	\]
	Of course, if $x = y$, then $\operatorname{dist}(x,y) \defeq 0$. Note that this metric is not shift-invariant and depends on the choice of the enumeration $\gamma_0$, $\gamma_1$, \ldots (in fact, the topology on $k^\G$ is not induced by any invariant metric).
	For $h \in [0;+\infty)$, the  \emphd{$h$-dimensional Hausdorff content} $C_h(X)$ of a set $X \subseteq k^\G$ is the infimum of all $\epsilon \in [0;+\infty)$ such that there is a cover $\mathscr{B}$ of $X$ by open balls with
	\[
	\sum_{B \in \mathscr{B}} \mathrm{diam}(B)^h = \epsilon.
	\]
	The \emphd{Hausdorff dimension} of $X$, denoted $\dim_H(X)$, is given by
	\[
		\dim_H(X) \defeq \inf \set{h \in [0;+\infty)\,:\, C_h(X) = 0}.
	\] 
	The following observation is easy:
	
	\begin{prop}
	We have $\dim_H(k^\G) = \log_2 k$ and $C_{\log_2k}(X) =0$ for any subshift $X \varsubsetneq k^\G$.
	\end{prop}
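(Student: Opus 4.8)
The key point is that, with respect to this metric, every open ball is a cylinder set determined by an \emph{initial segment} of the enumeration $\gamma_0, \gamma_1, \dots$. Indeed, for $0 < r \leq 1$, letting $m \geq 1$ be the unique integer with $2^{-m} < r \leq 2^{-m+1}$, one has
\[
	B(x, r) \;=\; \set{y \in k^\G \,:\, y(\gamma_i) = x(\gamma_i) \text{ for all } i < m},
\]
a set of diameter exactly $2^{-m}$ (here we use $k \geq 2$), while $B(x,r) = k^\G$, of diameter $1$, whenever $r > 1$. Consequently, covering a set $X \subseteq k^\G$ by open balls is the same as covering it by cylinders of this form, and if $\mu$ denotes the uniform Bernoulli measure on $k^\G$ (the product of the uniform probability measures on $k$), then $\mu(B) = \mathrm{diam}(B)^{\log_2 k}$ for every ball $B$ (including $B = k^\G$).

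We first treat the full shift. The $k^m$ cylinders fixing the coordinates $\gamma_0, \dots, \gamma_{m-1}$ cover $k^\G$, and the corresponding sum $k^m \cdot 2^{-mh} = (k\cdot 2^{-h})^m$ tends to $0$ as $m \to \infty$ whenever $h > \log_2 k$; hence $\dim_H(k^\G) \leq \log_2 k$. Conversely, for any $h \leq \log_2 k$ and any cover $\B$ of $k^\G$ by open balls,
\[
	1 \;=\; \mu(k^\G) \;\leq\; \sum_{B \in \B} \mu(B) \;=\; \sum_{B \in \B} \mathrm{diam}(B)^{\log_2 k} \;\leq\; \sum_{B \in \B} \mathrm{diam}(B)^{h},
\]
where the last inequality uses $\mathrm{diam}(B) \leq 1$. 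Therefore $C_h(k^\G) \geq 1$ for every $h \leq \log_2 k$, so $\dim_H(k^\G) \geq \log_2 k$, and moreover $C_{\log_2 k}(k^\G) \geq 1$.

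Now let $X \varsubsetneq k^\G$ be a subshift. Since $X$ is closed and $X \neq k^\G$, there is some $\phi \in \finf{\G}{k} \setminus \set{\0}$ with $X \cap U_\phi = \0$; write $F \defeq \dom(\phi)$, so $|F| \geq 1$. Because $X$ is shift-invariant, for every $\gamma \in \G$ no point of $X$ agrees on the translate $F\gamma$ with the corresponding translate of $\phi$. As $\G$ is infinite, for each $N \in \N$ we can greedily choose $\gamma_1, \dots, \gamma_N \in \G$ so that the translates $F\gamma_1, \dots, F\gamma_N$ are pairwise disjoint --- at step $j+1$, pick $\gamma_{j+1}$ outside the finite set $F^{-1}(F\gamma_1 \cup \dots \cup F\gamma_j)$ --- and then choose $m$ large enough that $F_m \defeq \set{\gamma_0, \dots, \gamma_{m-1}} \supseteq F\gamma_1 \cup \dots \cup F\gamma_N$. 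For each $i$, a pattern occurring in $X$ must avoid, on the block $F\gamma_i$, at least one of the $k^{|F|}$ possible patterns, so
\[
	|X_{F_m}| \;\leq\; \bigl(k^{|F|} - 1\bigr)^{N} \, k^{\,m - N|F|}.
\]
Covering $X$ by the $|X_{F_m}|$ cylinders over $F_m$, each of diameter $2^{-m}$, we obtain
\[
	C_{\log_2 k}(X) \;\leq\; |X_{F_m}| \cdot 2^{-m\log_2 k} \;=\; |X_{F_m}| \, k^{-m} \;\leq\; \bigl(1 - k^{-|F|}\bigr)^{N}.
\]
Since $0 < 1 - k^{-|F|} < 1$, letting $N \to \infty$ gives $C_{\log_2 k}(X) = 0$, as desired. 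The whole argument is elementary; the only mild subtleties are reading diameters off the (non-invariant) metric correctly and observing that, although the enumeration $\gamma_0, \gamma_1, \dots$ is fixed beforehand, each finite set $F_m$ eventually swallows any prescribed finite union of disjoint translates of $F$, which is exactly what lets us push $\bigl(1 - k^{-|F|}\bigr)^N$ to $0$.
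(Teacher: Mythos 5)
The paper records this proposition without proof, flagging it as an "easy observation," so there is no argument to compare against; your proof is correct and self-contained. A few points worth highlighting: you correctly identify the open balls in this (non-invariant) metric as cylinders over initial segments $\{\gamma_0,\dots,\gamma_{m-1}\}$ of the chosen enumeration, which is the step where the arbitrary choices in the definition actually get used; your lower bound $C_h(k^\G)\geq 1$ via the uniform Bernoulli measure is the standard mass-distribution argument; and for the proper-subshift half you make essential use of shift-invariance, via the greedily-chosen disjoint translates $F\gamma_1,\dots,F\gamma_N$, which is necessary since a merely closed proper subset (e.g.\ a single cylinder) can have positive $\log_2 k$-content. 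An equivalent but less elementary route for the second half would be to note that $\nu^\G(X)\in\{0,1\}$ by ergodicity of the Bernoulli shift, hence $\nu^\G(X)=0$ for $X\varsubsetneq k^\G$ closed, and then pass from a small-measure open superset to a small cover by balls; your counting argument avoids invoking ergodicity and matches the spirit of the paper's nearby proof that $\w(X)<\log_2 k$ via regularity of $\nu^\G$.
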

	
	\subsection{{Entropy for amenable groups}}\label{subsec:e_amenable}
	
	A group $\G$ is called \emphd{amenable} if it admits a \emphd{F\o lner sequence}, i.e., a sequence of nonempty finite subsets $(F_n)_{n=0}^\infty$ such that
	\[
	\lim_{n \to \infty} \frac{|(\gamma F_n) \symdif F_n|}{|F_n|} \,=\,0 \qquad \text{for all } \gamma \in \G.
	\]
	If $\G$ is amenable, then the \emphd{\ep{topological} entropy} $h(X)$ of a nonempty subshift $X \subseteq k^\G$ is given by
	\[
	h(X) \,\defeq\, \lim_{n \to \infty} \frac{\log_2 |X_{F_n}|}{|F_n|},
	\]
	where $(F_n)_{n=0}^\infty$ is a F\o lner sequence in $\G$. By a fundamental result of Ornstein and Weiss~\cite{OrnsteinWeiss}, the above limit always exists and is independent of the choice of the F\o lner sequence $(F_n)_{n=0}^\infty$. The entropy of a subshift obeys the following bounds:
	
	\begin{prop}\label{prop:bounds_amenable}
		If $\G$ is amenable, then $h(k^\G) = \log_2 k$ and $h(X) < \log_2 k$ for any subshift $X \varsubsetneq k^\G$.
	\end{prop}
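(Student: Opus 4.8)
The first equality requires no work: for any F\o lner sequence $(F_n)_{n=0}^\infty$ we have $(k^\G)_{F_n} = k^{F_n}$, so $\log_2 |(k^\G)_{F_n}|/|F_n| = \log_2 k$ for every $n$, whence $h(k^\G) = \log_2 k$. The content of the proposition is therefore the strict inequality, and the plan is the classical ``entropy drops once a block is forbidden'' argument. We may assume $X \neq \0$; pick $y \in k^\G \setminus X$. Since $X$ is closed and the basic open sets form a base for the topology, some $U_\phi$ with $\phi \in \finf{\G}{k} \setminus \set{\0}$ satisfies $y \in U_\phi$ and $U_\phi \cap X = \0$. Writing $D \defeq \dom(\phi)$ (a nonempty finite set) and using shift-invariance of $X$, we get $U_{\gamma \cdot \phi} \cap X = \0$ for every $\gamma \in \G$; unravelling the definitions, this says exactly that for every $x \in X$ and every $\gamma \in \G$ there is some $\delta \in D$ with $x(\delta\gamma) \neq \phi(\delta)$ --- that is, the pattern $\phi$ never occurs in any point of $X$.

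I would then count patterns. Fix a F\o lner sequence $(F_n)_{n=0}^\infty$ and, for each $n$, set $E_n \defeq \set{\gamma \in F_n \,:\, D\gamma \subseteq F_n}$. Since $F_n \setminus E_n \subseteq \bigcup_{d \in D}(F_n \setminus d^{-1}F_n)$ and $|F_n \setminus d^{-1}F_n| = \tfrac{1}{2}|(dF_n) \symdif F_n|$, the F\o lner property yields $|E_n|/|F_n| \to 1$. A greedy packing inside $E_n$ --- repeatedly choosing $\gamma \in E_n$ and deleting $(D^{-1}D)\gamma$ --- produces pairwise disjoint translates $D\gamma_1, \dots, D\gamma_{m_n} \subseteq F_n$ with $m_n \geq |E_n|/|D^{-1}D|$, so $m_n \geq c\,|F_n|$ for all large $n$, where $c \defeq 1/(2|D^{-1}D|) > 0$. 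Now each $\sigma \in X_{F_n}$ is $\rest{x}{F_n}$ for some $x \in X$, and on every $D\gamma_i$ the point $x$ avoids the one function $D\gamma_i \to k$ dictated by $\phi$; hence $\sigma$ takes at most $k^{|D|} - 1$ values on $D\gamma_i$ and at most $k$ values on each of the remaining $|F_n| - m_n|D|$ coordinates, giving $|X_{F_n}| \leq (k^{|D|}-1)^{m_n}\,k^{|F_n| - m_n|D|}$.

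Taking logarithms and dividing by $|F_n|$, this gives, for all large $n$,
\[
\frac{\log_2 |X_{F_n}|}{|F_n|} \;\leq\; \log_2 k \,-\, \frac{m_n}{|F_n|}\log_2 \frac{k^{|D|}}{k^{|D|}-1} \;\leq\; \log_2 k \,-\, c\,\log_2 \frac{k^{|D|}}{k^{|D|}-1}.
\]
Since the limit defining $h(X)$ exists by the Ornstein--Weiss theorem and $\log_2\frac{k^{|D|}}{k^{|D|}-1} > 0$ (as $k \geq 2$ and $|D| \geq 1$), we conclude $h(X) \leq \log_2 k - c\log_2\frac{k^{|D|}}{k^{|D|}-1} < \log_2 k$. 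I do not anticipate a genuine obstacle here: the only step needing a moment of care is the packing --- extracting linearly many disjoint translates of $D$ inside $F_n$ --- and that reduces, via the elementary F\o lner estimate $|E_n|/|F_n| \to 1$, to a one-line greedy argument.
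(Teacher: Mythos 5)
Your proof is correct. It is also genuinely different from what the paper does: the paper does not prove Proposition~\ref{prop:bounds_amenable} at all, but instead remarks (right after Proposition~\ref{prop:bounds_sofic}) that it is the special case of the sofic\-/entropy bounds cited from Kerr--Li, \emph{Ergodic Theory: Independence and Dichotomies}, Propositions~10.28 and 10.29. Your argument, by contrast, is a self\-/contained proof within the amenable framework, using the classical ``forbidden pattern makes entropy drop'' scheme: pick a basic open set $U_\phi$ disjoint from $X$, translate it to get a globally forbidden pattern on $D \defeq \dom(\phi)$, then pack linearly many disjoint copies of $D$ into the F\o lner sets via the elementary estimate $|E_n|/|F_n|\to 1$ and a greedy $D^{-1}D$\-/separation argument, and finally bound $|X_{F_n}|$ by $(k^{|D|}-1)^{m_n}k^{|F_n|-m_n|D|}$. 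Each step checks out: $F_n\setminus E_n\subseteq\bigcup_{d\in D}(F_n\setminus d^{-1}F_n)$ and $|F_n\setminus d^{-1}F_n|=\tfrac12|(dF_n)\symdif F_n|$ (both via the bijection $\gamma\mapsto d\gamma$), the greedy step really does yield pairwise disjoint translates with $m_n\geq|E_n|/|D^{-1}D|$, and the resulting uniform gap $c\log_2\tfrac{k^{|D|}}{k^{|D|}-1}>0$ survives the Ornstein--Weiss limit. What the paper's citation buys is uniformity: the sofic version is needed elsewhere anyway (Lemma~\ref{lemma:using_LLL}\ref{item:lemma:sofic}), so quoting Kerr--Li covers both cases at once. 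What your direct proof buys is elementarity and independence from the sofic machinery, at the cost of being specific to the amenable setting.
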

	
	\subsection{{Entropy for sofic groups}}\label{subsec:e_sofic}
	
	A \emphd{pseudo-action} of $\G$ on a set $V$ is a map
	\[
		\alpha \colon \G \times V \to V \colon (\gamma, v) \mapsto \gamma \cdot_\alpha v.
	\]
	We write $\alpha \colon \G \pacts V$ to indicate that $\alpha$ is a pseudo-action of $\G$ on $V$. When $\alpha$ is understood, we usually simply write $\gamma \cdot v$ instead of $\gamma \cdot_\alpha v$.
	
	Let $\alpha \colon \G \pacts V$ be a pseudo-action of $\G$ on a set $V$ and let $F \subseteq \G$. An element $v \in V$ is \emphd{$F$-proper} \ep{with respect to~$\alpha$} if the following conditions are satisfied:
	\begin{itemize}[label=---]
		\item \emph{identity}: $\mathbf{1} \cdot v = v;$
		\item \emph{$F$-equivariance}: $\gamma \cdot (\delta \cdot v) = (\gamma\delta) \cdot v$ for all $\gamma$, $\delta \in F$ such that $\gamma\delta \in F$;
		\item \emph{$F$-freeness}: $\gamma \cdot v = \delta \cdot v \,\Longrightarrow\, \gamma = \delta$ for all $\gamma$, $\delta \in F$.
	\end{itemize}
	Let $\Prop_F(\alpha)$ denote the set of all $F$-proper elements $v \in V$. Note that $\alpha$ is a free action if and only if $\Prop_\G(\alpha) = V$. If $V$ is a finite set, then $\alpha$ is \emphd{$(\epsilon, F)$-faithful} for $\epsilon > 0$ and $F \subseteq \G$ if
	\[
		|\Prop_F(\alpha)| \geq (1-\epsilon)|V|. 
	\]
	
	A group $\G$ is called \emphd{sofic} if it admits a \emphd{sofic approximation}, i.e., a sequence $(\alpha_n)_{n = 0}^\infty$ of pseudo\-/actions on nonempty finite sets such that for all $\epsilon > 0$ and $F \in \fins{\G}$, all but finitely many of the pseudo-actions $\alpha_n$ are $(\epsilon, F)$-faithful.
	
	Sofic groups were introduced by Gromov~\cite{Gromov} as a common generalization of amenable and residually finite groups (the term ``sofic'' was coined somewhat later by Weiss~\cite{WeissSofic}). In a major breakthrough, Bowen~\cite{Bow0} generalized the notion of entropy from amenable to all sofic groups. Bowen's work was further extended by Kerr and Li~\cite{KerrLi}, who, in particular, introduced sofic entropy to the topological setting and proved the variational principle for actions of sofic groups. The presentation below is a slight modification of \cite[Section 7]{BowenIntro}.
	
	Let $\alpha \colon \G \pacts V$ be a pseudo-action of $\G$. For a function $f \colon V \to k$, define the map $\pi_f \colon V \to k^\G$ via
	\[
	\pi_f(v)(\gamma) \defeq f(\gamma \cdot v) \qquad \text{for all } v\in V \text{ and } \gamma \in \G.
	\]
	Note that if $\alpha$ is an \emph{action}, then the map $\pi_f \colon V \to k^\G$ is equivariant; in general, we have
	\[
		(\gamma \cdot \pi_f(v))(\delta) = \pi_f(\gamma \cdot v)(\delta) \qquad \text{whenever $\gamma$, $\delta \in \G$ and $v$ is $\set{\gamma, \delta, \delta\gamma}$-proper.}
	\]
	Let $X \subseteq k^\G$ be a subshift and suppose that the set $V$ is finite. An \emphd{$(\epsilon, F)$\=/approximate $X$\=/coloring} of $\alpha$, where $\epsilon > 0$ and $F \in \fins{\G}$, is a function $f \colon V \to k$ such that
	\[
	|\set{v \in V \,:\, \rest{\pi_f(v)}{F} \in X_F}| \geq  (1-\epsilon)|V|.
	\]
	The set of all $(\epsilon, F)$-approximate $X$-colorings of $\alpha$ is denoted by $\Col \epsilon F \alpha X$. Let
	\[
	h_{\epsilon, F}(X, \alpha) \,\defeq \, \frac{\log_2|\Col \epsilon F \alpha X|}{|V|}.
	\]
	If $\Col \epsilon F \alpha X = \0$, then, by definition, $h_{\epsilon, F}(X, \alpha) \defeq -\infty$.
	
	Now assume that $\G$ is sofic and let $\Sigma = (\alpha_n)_{n=0}^\infty$ be a sofic approximation to $\G$. The \emphd{\ep{topological} entropy} $h(X, \Sigma)$ of a nonempty subshift $X \subseteq k^\G$ with respect to $\Sigma$ is given by
	\[
	h(X, \Sigma) \,\defeq\, \inf_{\epsilon, F} \,\limsup_{n \to \infty} \, h_{\epsilon, F}(X, \alpha_n),
	\]
	where $\epsilon$ ranges over the positive reals and $F$ ranges over the finite subsets of $\G$. If $\G$ is amenable, then we have $h(X, \Sigma) = h(X)$ for any sofic approximation $\Sigma$~\cite{Bowen_amenable, KerrLi_amenable}. In general, however, the value of $h(X, \Sigma)$ may depend on $\Sigma$. Nevertheless, we have the following:
	
	\begin{prop}[{\cite[Propositions~10.28 and 10.29]{KerrLi_book}}]\label{prop:bounds_sofic}
		If $\G$ is sofic and $\Sigma$ is a sofic approximation to $\G$, then $h(k^\G, \Sigma) = \log_2 k$ and $h(X, \Sigma) < \log_2 k$ for any subshift $X \varsubsetneq k^\G$.
	\end{prop}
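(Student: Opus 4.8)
The plan is to handle the two assertions separately. The equality $h(k^\G, \Sigma) = \log_2 k$ is immediate: since $(k^\G)_F = k^F$ for every $F \in \fins{\G}$, \emph{every} function $f \colon V \to k$ is an $(\epsilon, F)$-approximate $k^\G$-coloring of any pseudo-action $\alpha \colon \G \pacts V$ on a finite set $V$ (as $\rest{\pi_f(v)}{F} \in k^F = (k^\G)_F$ for all $v$), so $|\Col{\epsilon}{F}{\alpha_n}{k^\G}| = k^{|V_n|}$ and $h_{\epsilon,F}(k^\G, \alpha_n) = \log_2 k$ for all $\epsilon$, $F$, $n$. The remaining work concerns a proper subshift $X \varsubsetneq k^\G$, which we may assume nonempty (otherwise $X_F = \0$, no $f$ is an approximate coloring, and $h(X,\Sigma) = -\infty$). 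As $k^\G \setminus X$ is nonempty and open, it contains a basic open set $U_{\phi_0}$ with $\phi_0 \in k^{F_0}$, $F_0 \in \fins{\G} \setminus \set{\0}$; hence $\phi_0 \notin X_{F_0}$, so for every $F \supseteq F_0$ the restriction to $F_0$ of a pattern in $X_F$ is never $\phi_0$. Put $F \defeq F_0 \cup F_0^{-1} \cup F_0^{-1}F_0$. Since $h(X, \Sigma)$ is the infimum over all pairs $(\epsilon', F')$ of the limsups $\limsup_n h_{\epsilon', F'}(X, \alpha_n)$, it is enough to show $\limsup_n h_{\epsilon, F}(X, \alpha_n) < \log_2 k$ for some small enough $\epsilon > 0$ (depending only on $k$ and $F_0$).

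The first ingredient is a \emph{packing} estimate: if $V$ is finite and $|\Prop_F(\alpha)| \geq \tfrac12 |V|$, then there is $W \subseteq \Prop_F(\alpha)$ with $|W| \geq |V|/(2(|F_0|^2 + 1))$ such that the sets $F_0 \cdot w$, $w \in W$, are pairwise disjoint, each of size $|F_0|$ (the latter by $F_0$-freeness). To obtain it, consider the graph on $\Prop_F(\alpha)$ joining $w$, $w'$ whenever $(F_0 \cdot w) \cap (F_0 \cdot w') \neq \0$: if $\delta \cdot w = \delta' \cdot w'$ with $\delta, \delta' \in F_0$, then the identity axiom and $F$-equivariance (applicable since $\delta^{-1}, \delta^{-1}\delta' \in F$) force $w = (\delta^{-1}\delta') \cdot w'$, so $w$ is determined by $w'$ and the pair $(\delta,\delta')$; thus this graph has maximum degree $\leq |F_0|^2$, and a greedy independent set has the required size. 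By soficity, $|\Prop_F(\alpha_n)| \geq \tfrac12 |V_n|$ for all large $n$, yielding $W_n$ with $|W_n|/|V_n| \geq \rho \defeq 1/(2(|F_0|^2 + 1))$.

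It remains to count. Fix a large $n$, write $V \defeq V_n$, $W \defeq W_n$, and take $f \in \Col{\epsilon}{F}{\alpha_n}{X}$. For $w \in W$ with $\rest{\pi_f(w)}{F} \in X_F$ we have $\rest{\pi_f(w)}{F_0} \neq \phi_0$; since $\pi_f(w)(\delta) = f(\delta \cdot w)$ for $\delta \in F_0$ and $\delta \mapsto \delta \cdot w$ is a bijection from $F_0$ onto $F_0 \cdot w$, the restriction $\rest{f}{F_0 \cdot w}$ is one of at most $k^{|F_0|} - 1$ maps $F_0 \cdot w \to k$, and at most $\epsilon|V| \leq (\epsilon/\rho)|W|$ of the $w \in W$ fail $\rest{\pi_f(w)}{F} \in X_F$. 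As $V$ is partitioned into $V \setminus \bigcup_{w \in W} F_0 \cdot w$ and the $|W|$ blocks $F_0 \cdot w$, encoding $f$ by its restrictions to these pieces, choosing which at most $(\epsilon/\rho)|W|$ blocks are ``bad'', and bounding the number of maps on each piece gives
\[
	|\Col{\epsilon}{F}{\alpha_n}{X}| \,\leq\, \left( 2^{H(\epsilon/\rho)} \left(\frac{k^{|F_0|}}{k^{|F_0|}-1}\right)^{\epsilon/\rho} (k^{|F_0|}-1) \right)^{|W|} k^{\,|V|-|W|\,|F_0|},
\]
where $H$ is the binary entropy function. Taking $\log_2$, dividing by $|V|$, and using $|W|/|V| \geq \rho$ yields $h_{\epsilon, F}(X, \alpha_n) \leq \log_2 k - \rho\, \eta_0$, where $\eta_0 \defeq (1 - \epsilon/\rho)\log_2\frac{k^{|F_0|}}{k^{|F_0|}-1} - H(\epsilon/\rho)$ is positive once $\epsilon$ is small; hence $\limsup_n h_{\epsilon,F}(X, \alpha_n) \leq \log_2 k - \rho\, \eta_0 < \log_2 k$.

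I expect the main obstacle to be the packing estimate, specifically the verification that the overlap graph has bounded degree. This is exactly what dictates the choice $F \supseteq F_0 \cup F_0^{-1} \cup F_0^{-1}F_0$: it ensures that $F$-equivariance and $F$-freeness of proper points allow one to cancel group elements just as in a genuine action. Everything else is routine bookkeeping with the counting and with the definitions of $\pi_f$, $\Prop_F$, and $h_{\epsilon,F}$.
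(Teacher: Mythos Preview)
The paper does not supply its own proof of this proposition: it is stated with a citation to \textsc{Kerr} and \textsc{Li}'s book and then used as a black box. So there is nothing in the paper to compare your argument against.

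Your argument itself is correct. The computation $h(k^\G,\Sigma)=\log_2 k$ is immediate as you say. For $X\varsubsetneq k^\G$, the key steps---choosing $F\supseteq F_0\cup F_0^{-1}\cup F_0^{-1}F_0$ so that $F$\nobreakdash-properness lets you cancel as in a genuine action, extracting a packing $W$ of proportional size via a degree bound on the overlap graph, and then counting colorings block by block with the binary-entropy bound on the number of ``bad'' blocks---all go through. Two small remarks worth making explicit in a polished write-up: first, the inequality $\sum_{j\leq m}\binom{|W|}{j}\leq 2^{H(m/|W|)|W|}$ you implicitly use requires $m/|W|\leq 1/2$, which is guaranteed once $\epsilon$ is small; second, the bijection $\delta\mapsto\delta\cdot w$ identifies $\rest{\pi_f(w)}{F_0}$ with $\rest{f}{F_0\cdot w}$, so the single forbidden map on each good block is $\delta\cdot w\mapsto\phi_0(\delta)$. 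With these clarifications your proof is a clean, elementary, and fully self-contained substitute for the cited reference.
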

	
	Note that Proposition~\ref{prop:bounds_amenable} is a special case of Proposition~\ref{prop:bounds_sofic}.

	\section{Main definitions and results}\label{sec:main_result}
	
	
	\subsection{{Width and pointwise width}}\label{subsec:width}
	
	For $\phi \in \finf{\G}{k}\setminus \set{\0}$, let
	\[
		\d(U_\phi) \defeq 2^{-|\phi|}.
	\]
	Note that we have $0 < \d(U) \leq 1/2$ for every basic open set $U$. The value $\d(U)$ is preserved by the shift action and thus can be viewed as a shift-invariant alternative to the diameter $\mathrm{diam}(U)$. For a family $\U$ of basic open sets and a parameter $h \in [0;+\infty)$, let $\rho_h(\U) \defeq \sum_{U \in \U} \d(U)^h$ and define
	\[
		\w(\U) \defeq\inf\set{h \in [0;+\infty) \,:\, \rho_h(\U) \leq 1}.
	\]
	If the family $\U$ is finite and nonempty, then $\rho_h(\U)$, viewed as a function of $h$, is continuous and strictly decreasing. Thus, $\w(\U)$ for such $\U$ is equal to the unique $h \in [0;+\infty)$ with $\rho_h(\U) = 1$.
	
	A \emphd{cover} of a set $X \subseteq k^\G$ is a family $\U$ of basic open sets such that $X \subseteq \bigcup \U$. The \emphd{width} of $X$, denoted $\w(X)$, is defined via
	\[
		\w(X) \defeq  \inf \set{\w(\U)\,:\, \U \text{ is a cover of } X}.
	\]
	Notice the close analogy between this definition and that of $\dim_H(X)$ (see also Proposition~\ref{prop:comparison}\ref{item:Hausdorff}). We will frequently use the fact that, since the space $k^\G$ is compact, to determine $\w(X)$ for a {closed} subset $X \subseteq k^\G$ it is enough to only consider {finite} covers of $X$.
	
	The \emphd{pointwise width} of a set $X \subseteq k^\G$, denoted $\w^\ast(X)$, is given by
	\[
	\w^\ast(X) \defeq  \inf_{x \in X} \min \set{\w(\overline{\G \cdot x}), \, \w(\overline{x \cdot \G})}.
	\]
	Technically, we have $\w^\ast(\0) = +\infty$ (even though $\w(\0) = 0$).
	
	
	\begin{prop}\label{prop:easy}The following statements are valid:
		\mbox{}\begin{enumerate}[label=\normalfont\ep{\roman*},align=left,labelindent=\parindent,leftmargin=*,itemsep=\parskip]
			\item\label{item:subsets} If $Y \subseteq X \subseteq k^\G$, then $\w(Y) \leq \w(X)$ and $\w^\ast(Y) \geq \w^\ast(X)$.

			\item\label{item:bounds_for_subshifts} If $X \subseteq k^\G$ is a nonempty subshift, then $\w^\ast(X) \leq  \w(X)$.
		\end{enumerate}	
	\end{prop}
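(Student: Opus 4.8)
The plan is to read both statements directly off the definitions; I do not anticipate a genuine obstacle, and the only point worth being careful about is that part~\ref{item:bounds_for_subshifts} relies on the left-shift-invariance of $X$ and does not involve the right-shift orbit closure at all.

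First I would prove part~\ref{item:subsets}. For the inequality $\w(Y) \leq \w(X)$: if $\U$ is any cover of $X$ and $Y \subseteq X$, then $Y \subseteq X \subseteq \bigcup \U$, so $\U$ is also a cover of $Y$; hence the infimum defining $\w(Y)$ ranges over a collection of families $\U$ that includes every cover of $X$, which gives $\w(Y) \leq \w(X)$. For $\w^\ast(Y) \geq \w^\ast(X)$: the quantity $\w^\ast(\cdot)$ is by definition an infimum indexed by the points of the set in question, and replacing $X$ by a subset $Y$ merely removes points from that index set, which can only increase the infimum (with the stated convention $\w^\ast(\0) = +\infty$ covering the degenerate case $Y = \0$).

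Next, for part~\ref{item:bounds_for_subshifts}, I would fix an arbitrary $x \in X$, which is possible because $X \neq \0$. Since $X$ is a subshift it is closed and left-shift-invariant, so $\G \cdot x \subseteq X$ and therefore $\overline{\G \cdot x} \subseteq \overline{X} = X$. By part~\ref{item:subsets} this gives $\w(\overline{\G \cdot x}) \leq \w(X)$, whence
\[
	\min \set{\w(\overline{\G \cdot x}), \, \w(\overline{x \cdot \G})} \;\leq\; \w(\overline{\G \cdot x}) \;\leq\; \w(X).
\]
Taking the infimum over all $x \in X$ then yields $\w^\ast(X) \leq \w(X)$. Note that the right-shift orbit closure $\overline{x \cdot \G}$ need not be contained in $X$, but this is irrelevant here: it enters the bound only inside a minimum, and the left-orbit term already supplies what is needed. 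If there is any delicate point, it is purely bookkeeping — making sure the empty-set and empty-cover conventions are invoked consistently so that the monotonicity of the infima is applied in the correct direction.
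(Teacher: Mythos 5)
Your proof is correct and matches the paper's argument: part~\ref{item:subsets} is read off directly from the definitions of $\w$ and $\w^\ast$ as infima over covers and points respectively, and part~\ref{item:bounds_for_subshifts} uses exactly the observation that $\overline{\G \cdot x} \subseteq X$ for a subshift $X$, giving $\w^\ast(X) \leq \w(\overline{\G \cdot x}) \leq \w(X)$. The only difference is that your write-up is somewhat more explicit (e.g., invoking part~\ref{item:subsets} for the inclusion bound and spelling out the role of the minimum), but the substance is identical.
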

	\begin{proof}[{\textsc{Proof}}]
		Part \ref{item:subsets} is clear, and for part \ref{item:bounds_for_subshifts}, notice that for every point $x \in X$, we have $\overline{\G \cdot x} \subseteq X$, hence $\w^\ast(X) \leq \w(\overline{\G \cdot x}) \leq \w(X)$.
	\end{proof}
	
	\begin{prop}
		We have $\w(k^\G) = \log_2 k$ and $\w(X) < \log_2 k$ for any closed set $X \varsubsetneq k^\G$.
	\end{prop}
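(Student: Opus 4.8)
The plan is to treat the two claims separately, in each case by exhibiting an explicit finite cover and, for $k^\G$, supplying a matching lower bound; by the compactness remark above, it suffices throughout to consider finite covers. For the easy inequality $\w(k^\G) \leq \log_2 k$ I would take the cover $\U_0 \defeq \set{U_{\set{(\mathbf{1}, j)}} \,:\, j \in k}$: every $x \in k^\G$ lies in exactly one of these, each has $\d$-value $1/2$, so $\rho_h(\U_0) = k \cdot 2^{-h}$ and hence $\w(\U_0) = \log_2 k$. For the reverse inequality I would pass to the uniform product measure $\mu$ on $k^\G$ (each coordinate uniform on $k$) and use the identity $\mu(U_\phi) = k^{-|\phi|} = \d(U_\phi)^{\log_2 k}$. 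Then for any finite cover $\U$ of $k^\G$, subadditivity gives $\rho_{\log_2 k}(\U) = \sum_{U \in \U} \mu(U) \geq \mu(k^\G) = 1$; and since $\U \neq \0$ and $0 < \d(U) \leq 1/2$ for every basic open set, for each $h < \log_2 k$ we get $\d(U)^h > \d(U)^{\log_2 k}$ termwise, so $\rho_h(\U) > \rho_{\log_2 k}(\U) \geq 1$. Thus $\w(\U) \geq \log_2 k$ for every finite cover, giving $\w(k^\G) \geq \log_2 k$.

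For a proper closed set $X \varsubsetneq k^\G$, the set $k^\G \setminus X$ is nonempty and open, hence contains a basic open set $U_\psi$ with $\psi \in \finf{\G}{k} \setminus \set{\0}$; write $D \defeq \dom(\psi)$ and $d \defeq |D| \geq 1$. Since $X \subseteq k^\G \setminus U_\psi = \set{x \in k^\G \,:\, \rest{x}{D} \neq \psi}$, the family $\U \defeq \set{U_\phi \,:\, \phi \in k^D,\ \phi \neq \psi}$ is a cover of $X$. It has $k^d - 1$ members, each of $\d$-value $2^{-d}$, so $\rho_h(\U) = (k^d - 1)\, 2^{-dh}$, which equals $1$ exactly when $h = \tfrac{1}{d}\log_2(k^d - 1)$. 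Because $k^d - 1 < k^d$, this value is strictly below $\log_2 k$, and therefore $\w(X) \leq \w(\U) < \log_2 k$, as required.

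The computations are routine; the only point worth flagging is the choice of cover in the proper case, which I take to be the main (mild) obstacle. The naive cover of $k^\G \setminus U_\psi$ by the ``single-disagreement'' cylinders $U_{\set{(\gamma, j)}}$ with $\gamma \in D$ and $j \neq \psi(\gamma)$ has width $\log_2\big(d(k-1)\big)$, which exceeds $\log_2 k$ once $d$ is large and so is useless; replacing it by the cover by all restrictions to $D$ distinct from $\psi$ is what makes the bound come out below $\log_2 k$. Everything else is a direct computation with the function $\rho_h$, together with the observation that $\d(U_\phi)^{\log_2 k}$ is exactly the uniform-measure of $U_\phi$.
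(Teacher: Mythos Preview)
Your proof is correct. For the equality $\w(k^\G) = \log_2 k$ you argue essentially as the paper does, via the identity $\d(U_\phi)^{\log_2 k} = \nu^\G(U_\phi)$ for the uniform product measure and subadditivity; your explicit upper-bound cover $\U_0$ is just a concrete instance of the paper's observation that $\rho_{\log_2 k}$ over finite covers has infimum $1$.

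The genuine difference is in the treatment of a proper closed $X \varsubsetneq k^\G$. The paper proves the general identity
\[
\nu^\G(X) \,=\, \inf\set{\rho_{\log_2 k}(\U) \,:\, \U \text{ is a finite cover of } X},
\]
using regularity of $\nu^\G$ together with the fact that any finite family of basic open sets admits a pairwise-disjoint refinement (so that the sum of measures equals the measure of the union). From $\nu^\G(X) < 1$ it then immediately gets a cover with $\rho_{\log_2 k}(\U) < 1$. You instead pick a single basic open $U_\psi$ in the complement and exhibit the explicit cover $\set{U_\phi : \phi \in k^D,\ \phi \neq \psi}$, reducing the claim to the arithmetic inequality $(1/d)\log_2(k^d - 1) < \log_2 k$. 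Your route is more elementary and self-contained, since it avoids both the disjoint-refinement step and any appeal to regularity; the paper's route, on the other hand, yields the stronger identity above (relating width at level $\log_2 k$ directly to measure), which is of some independent interest even though it is not used elsewhere. Your closing remark about why the ``single-disagreement'' cover fails is a nice diagnostic and is not present in the paper's argument.
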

	\begin{proof}[{\textsc{Proof}}]
		Let $\nu$ denote the uniform probability measure on~$k$ and let $X \subseteq k^\G$ be a closed set. Since the product measure $\nu^\G$ on $k^\G$ is regular, we have
		\begin{align*}
			\nu^\G(X) &= \inf \set{\nu^\G(U) \,:\, U \subseteq k^\G \text{ is an open set with } U \supseteq X} \\
			&= \inf \set{\nu^\G (\bigcup \U) \,:\, \U \text{ is a finite cover of } X}.
		\end{align*}
		Since every finite family of basic open subsets of $k^\G$ admits a finite refinement consisting of pairwise disjoint basic open sets, we conclude that
		\begin{align}
			\nu^\G(X) &= \inf \set{\textstyle\sum\nolimits_{U \in \U} \nu^\G(U) \,:\, \U \text{ is a finite cover of } X} \nonumber \\
			&= \inf \set{\textstyle\sum\nolimits_{U \in \U} \d(U)^{\log_2 k} \,:\, \U \text{ is a finite cover of } X} \nonumber \\
			&=\inf\set{\rho_{\log_2 k}(\U) \,:\, \U \text{ is a finite cover of } X}.\label{eq:measure}
		\end{align}
		The desired conclusion now follows since $\nu^\G(k^\G) = 1$ and $\nu^\G(X) < 1$ if $X \neq k^\G$.
	\end{proof}
	
	The next proposition confirms the importance of width and pointwise width as notions of size:
	
	\begin{prop}\label{prop:comparison}
		{If $X \subseteq k^\G$ is a subshift, then:}
		\begin{enumerate}[label=\normalfont\ep{\roman*},align=left,labelindent=\parindent,leftmargin=*,itemsep=\parskip]
			\item\label{item:Hausdorff} {the Hausdorff dimension of $X$ is at least $\w(X)$;}
			\item\label{item:X_F/F} for every set $F \in \fins{\G} \setminus \set{\0}$, we have $\log_2 |X_F|/|F| \geq \w(X)$;
			\item\label{item:amenable} {if $\G$ is amenable, then the entropy of $X$ is a least $\w(X)$;}
			\item\label{item:free} {if $\w^\ast(X) > (1/2)\log_2 k$, then $X$ is free;}
			\item\label{item:open} {if $U \subseteq k^\G$ is a shift-invariant open set and $\w^\ast(X) > \w(k^\G \setminus U)$, then $X \subseteq U$.}
		\end{enumerate}
	\end{prop}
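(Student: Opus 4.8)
The plan is to prove \ref{item:Hausdorff}--\ref{item:amenable} by exhibiting explicit economical covers of $X$ by basic open sets, with \ref{item:X_F/F} as the workhorse, and to deduce \ref{item:free} and \ref{item:open} from the definition of pointwise width together with the monotonicity in Proposition~\ref{prop:easy}\ref{item:subsets}. For \ref{item:X_F/F}, assuming as we may that $X \neq \0$, fix $F \in \fins{\G} \setminus \set{\0}$ and consider the cover $\U \defeq \set{U_{\rest{x}{F}} \,:\, x \in X}$ of $X$: it is finite and nonempty, consists of exactly $|X_F|$ basic open sets each of $\d$-value $2^{-|F|}$, and therefore satisfies $\rho_h(\U) = |X_F|\,2^{-h|F|}$, which equals $1$ precisely when $h = \log_2|X_F|/|F|$. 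Hence $\w(X) \leq \w(\U) = \log_2|X_F|/|F|$. Part \ref{item:amenable} follows at once: for a F\o lner sequence $(F_n)$ we get $h(X) = \lim_n \log_2|X_{F_n}|/|F_n| \geq \w(X)$.

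For \ref{item:Hausdorff} the point is that every metric ball is itself a basic open set of the same ``size''. Indeed, for $0 < r \leq 1$ the ball $B(x,r)$ equals the cylinder $U_\psi$ with $\psi = \rest{x}{\set{\gamma_0,\dots,\gamma_{j-1}}}$, where $j$ is the least index with $2^{-j} < r$, and this cylinder has $\d(U_\psi) = 2^{-j} = \mathrm{diam}(B(x,r))$; balls of radius $> 1$ are all of $k^\G$ and may be ignored. Now fix any $h > \dim_H(X)$; then $h > 0$ and $C_h(X) = 0$, so there is a cover $\mathscr{B}$ of $X$ by open balls with $\sum_{B \in \mathscr{B}} \mathrm{diam}(B)^h < 1$. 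No member of $\mathscr{B}$ can be $k^\G$ (that would force the sum to be at least $1$), so replacing each ball by the corresponding cylinder produces a cover $\U$ of $X$ with $\rho_h(\U) \leq \sum_{B \in \mathscr{B}} \mathrm{diam}(B)^h < 1$, whence $\w(X) \leq \w(\U) \leq h$. Taking the infimum over all $h > \dim_H(X)$ gives $\w(X) \leq \dim_H(X)$; the borderline case $\dim_H(X) = \log_2 k$ is trivial since always $\w(X) \leq \w(k^\G) = \log_2 k$.

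Parts \ref{item:free} and \ref{item:open} I would prove contrapositively, and in each case it suffices to bound $\w$ of a \emph{single} orbit closure of a \emph{single} point. For \ref{item:open}: if $x \in X \setminus U$, then, $U$ being shift-invariant, the whole orbit $\G \cdot x$ stays inside the shift-invariant closed set $k^\G \setminus U$, so $\overline{\G \cdot x} \subseteq k^\G \setminus U$ and therefore $\w^\ast(X) \leq \w(\overline{\G \cdot x}) \leq \w(k^\G \setminus U)$ by Proposition~\ref{prop:easy}\ref{item:subsets}, contradicting the hypothesis. For \ref{item:free}: suppose $X$ is not free and pick $x \in X$, $\gamma \neq \mathbf{1}$ with $\gamma \cdot x = x$. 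By the right-shift-invariance of stabilizers noted in the introduction, $\gamma \cdot (x \cdot \delta) = x \cdot \delta$ for every $\delta \in \G$, and hence, by continuity of the shift action, $\gamma \cdot y = y$ for every $y \in \overline{x \cdot \G}$; such a $y$ satisfies $y(\gamma) = y(\mathbf{1})$, so $\rest{y}{\set{\mathbf{1},\gamma}}$ is constant. Consequently $\overline{x \cdot \G}$ is covered by the at most $k$ cylinders $U_\phi$ with $\phi \colon \set{\mathbf{1},\gamma} \to k$ constant, each of $\d$-value $2^{-2}$; this cover $\U$ has $\rho_{(1/2)\log_2 k}(\U) = |\U|/k \leq 1$, so $\w^\ast(X) \leq \w(\overline{x \cdot \G}) \leq \w(\U) \leq (1/2)\log_2 k$, contradicting $\w^\ast(X) > (1/2)\log_2 k$.

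Apart from routine bookkeeping, the only step that needs a moment of care is the identification in \ref{item:Hausdorff} of an arbitrary ball of the (enumeration-dependent, non-canonical) metric with a cylinder whose $\d$-value equals its diameter, ensuring that a Hausdorff ball cover is literally a $\w$-cover of the same total cost. I do not anticipate any genuine obstacle.
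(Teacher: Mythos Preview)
Your proposal is correct and follows essentially the same approach as the paper: parts \ref{item:X_F/F} and \ref{item:amenable} via the cylinder cover $\{U_\phi : \phi \in X_F\}$, part \ref{item:Hausdorff} via the identification of metric balls with basic open sets of equal diameter and $\d$-value, and parts \ref{item:free} and \ref{item:open} by bounding $\w$ of a single orbit closure (right orbit for \ref{item:free}, left orbit for \ref{item:open}) using the constant-on-$\{\mathbf{1},\gamma\}$ cylinders and monotonicity, respectively. Your write-up is simply more explicit than the paper's (e.g., you spell out why balls of radius $>1$ can be discarded and why every $y \in \overline{x \cdot \G}$ inherits the stabilizer element $\gamma$); the remark about the ``borderline case $\dim_H(X)=\log_2 k$'' in \ref{item:Hausdorff} is superfluous, since your infimum argument already covers it.
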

	\begin{proof}[{\textsc{Proof}}]
		\ref{item:Hausdorff} Let $\Ball(x,r)$ denote the open ball of radius $r > 0$ centered at a point $x \in k^\G$. If $n \in \N$ is such that $2^{-n-1} < r \leq 2^{-n}$, then $\Ball(x, r)$ is a basic open set with $\d(\Ball(x, r)) = \mathrm{diam}(\Ball(x, r)) = 2^{-n-1}$, and the desired result follows.
		
		\ref{item:X_F/F} The family $\set{U_\phi \,:\, \phi \in X_F}$ is a cover of $X$ with $\w(\set{U_\phi \,:\, \phi \in X_F}) = \log_2|X_F|/|F|$.
		
		\ref{item:amenable} Follows from \ref{item:X_F/F}.
			
		\ref{item:free} It is enough to prove that $\w(\overline{x \cdot \G}) \leq (1/2) \log_2 k$ for every point $x \in k^\G$ with $\St_\G(x) \neq \set{\mathbf{1}}$. To that end, suppose that $\mathbf{1} \neq \gamma \in \St_\G(x)$. For each $i < k$, let $\phi_i \colon \set{\mathbf{1}, \gamma} \to k$ be the map given by $\phi_i (\mathbf{1}) = \phi_i(\gamma) \defeq i$. Then $\set{U_{\phi_i} \,:\,i < k}$ is a cover of $\overline{x \cdot \G}$ and $\w(\set{U_{\phi_i} \,:\,i < k}) = (1/2) \log_2 k$.
		
		\ref{item:open} For any $x \in k^\G \setminus U$, we have $\overline{\G \cdot x} \subseteq k^\G \setminus U$, and hence $\w(\overline{\G \cdot x}) \leq \w(k^\G \setminus U)$.
	\end{proof}
	
	
	\subsection{{Breadth}}\label{subsec:elasticity}
	
	For a basic open set $U$ and a parameter $h \in (0; + \infty)$, let
	\[
		\sigma_h(U) \defeq \log_2 \d(U) \cdot \log_2 (1 - \d(U)^h).
	\]
	Note that both $\log_2\d(U)$ and $\log_2 (1- \d(U)^h)$ are negative, so $\sigma_h(U) > 0$. It is often useful to keep in mind that, when $\d(U)$ is small, we have
	\begin{equation}\label{eq:asymptotics}
	\sigma_h(U) \,\approx\, \log_2e \cdot |\log_2\d(U)| \cdot \d(U)^h.
	\end{equation}
	If $U = U_\phi$ for $\phi \in \finf{\G}{k} \setminus \set{\0}$, then \eqref{eq:asymptotics} can be rewritten as
	\[
		\sigma_h(U) \,\approx\, \log_2e \cdot |\phi| \cdot 2^{-h|\phi|}.
	\]
	For large $|\phi|$, the ``main'' term in the above expression is $2^{-h|\phi|}$, which is equal to $\d(U)^h$. In other words, it is usually safe to think of $\sigma_h(U)$ as ``almost'' equal to $\d(U)^h$, modulo a small perturbation.
	
	For a family $\U$ of basic open sets and $h \in (0;+\infty)$, let $\sigma_h(\U) \defeq \sum_{U \in \U} \sigma_h(U)$ and define
	\begin{equation}\label{eq:b_U}
		\b(\U) \defeq\sup\set{h \in (0;+\infty) \,:\, h + \sigma_h(\U) < \log_2k}.
	\end{equation}
	The value $\sigma_h(\U)$ is non-increasing as a function of $h$ (we cannot say that it is \emph{strictly} decreasing, but only because it may be infinite). Due to this fact, the expression $h + \sigma_h(\U)$ appearing in \eqref{eq:b_U} is not, in general, a monotone function of $h$. By convention, if $h + \sigma_h(\U) \geq \log_2 k$ for all $h \in (0;+\infty)$, then we set $\b(\U) \defeq 0$.
	
	An \emphd{action-cover} of a set $W \subseteq k^\G$ is a family $\U$ of basic open sets such that $W \subseteq \bigcup (\G \cdot \U)$, i.e., the translates of the sets in $\U$ cover $W$. The \emphd{breadth} of a set $X \subseteq k^\G$, denoted $\b(X)$, is given by
	\[
	\b(X) \defeq \sup \set{\b(\U) \,:\, \U \text{ is an action-cover of } k^\G \setminus X}.
	\]
	In contrast to $\w(X)$, to determine $\b(X)$ for a subshift $X$ we typically have to allow infinite families~$\U$. As mentioned in the introduction, the notion of breadth is made useful by the fact that a lower bound on $\b(X)$ can be witnessed by a \emph{single} action-cover $\U$ of $k^\G \setminus X$. On the other hand, obtaining {upper} bounds on $\b(X)$ can be more difficult. Indeed, \emph{a priori} it is not even obvious that $\b(\0) = 0$. (However, this statement is true and is part of our main result.)

	\subsection{{The main result}}\label{subsec:proof_corl}
	
	At this point, after all the necessary definitions have been introduced, we restate our main result, for the reader's convenience:
	
	
	
	\begin{theo}\label{theo:main}
		Let $X \subseteq k^\G$ be a subshift such that $\b(X) > 0$. Then $\w(X) \geq \b(X)$; moreover, for any $h < \b(X)$, there exists a nonempty subshift $X' \subseteq X$ with the following properties:
		\begin{enumerate}[label=\normalfont\ep{\roman*},align=left,labelindent=\parindent,leftmargin=*, itemsep=\parskip]
			\item the pointwise width of $X'$ is at least $h$;
			\item\label{item:main:sofic} if $\G$ is sofic, then the entropy of $X'$ with respect to any sofic approximation is at least~$h$;
			\item there exist an invariant probability measure $\mu$ on $X'$ and a factor map \[\pi \colon ([0;1]^\G, \lambda^\G) \to (X', \mu).\]
		\end{enumerate}
	\end{theo}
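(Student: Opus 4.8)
The plan is to deduce the whole statement from a single application of the Lov\'asz Local Lemma. Fix $h < \b(X)$ and, using that $\b(X)$ is by definition a supremum of suprema, choose an action-cover $\U$ of $k^\G \setminus X$ together with a real $h_1 \in (h, \b(\U)]$ with $h_1 + \sigma_{h_1}(\U) < \log_2 k$. Consider a uniformly random $x \in k^\G$ and, for each $\gamma \in \G$ and $U_\phi \in \U$, the \emph{membership event} $A_{\gamma, U_\phi} \defeq \{x \in \gamma \cdot U_\phi\}$, which has probability $\d(U_\phi)^{\log_2 k} = k^{-|\phi|}$ and footprint of size $|\phi|$ in $\G$; assign it the LLL weight $\d(U_\phi)^{h_1}$. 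The key bookkeeping identity is that the total weight $\sum_j \bigl(-\log_2(1 - \mathrm{weight}_j)\bigr)$ accumulated at any single coordinate $\delta \in \G$ by the events whose footprint contains $\delta$ equals \emph{exactly} $\sigma_{h_1}(\U)$: each of the $|\phi| = -\log_2 \d(U_\phi)$ positions of $\dom(\phi)$ contributes one footprint through $\delta$ with weight $-\log_2\bigl(1 - \d(U_\phi)^{h_1}\bigr)$, so the $U_\phi$-contribution is $\log_2\d(U_\phi)\cdot\log_2\bigl(1-\d(U_\phi)^{h_1}\bigr) = \sigma_{h_1}(U_\phi)$. Hence the LLL inequality for $A_{\gamma, U_\phi}$ reduces, after taking $-\log_2$ and dividing by $|\phi|$, to $h_1 + \sigma_{h_1}(\U) \leq \log_2 k$, which holds with room to spare. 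So the LLL applies: $Z \defeq k^\G \setminus \bigcup(\G \cdot \U)$ is a nonempty subshift, and, because $\U$ action-covers $k^\G \setminus X$, $Z \subseteq X$. Moreover the LLL produces a shift-invariant ``witnessing'' probability measure $\mu$ on $Z$, and by temporarily adjoining an arbitrary basic open set $U_\phi$ as an extra bad event one obtains, from the surplus in the inequality, the Frostman-type bound $\mu(U_\phi) \leq \d(U_\phi)^{h_1}$ for \emph{every} $\phi$.

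The inequality $\w(X) \geq \b(X)$ now follows at once: for any finite cover $\V$ of $X$ we have $1 = \mu(X) \leq \sum_{V \in \V}\mu(V) \leq \sum_{V\in\V}\d(V)^{h_1} = \rho_{h_1}(\V)$, so $\w(\V) \geq h_1$, whence $\w(X) \geq h_1$; letting $\U$ and $h_1$ run to the supremum defining $\b(X)$ gives $\w(X) \geq \b(X)$.

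To obtain $X'$ one enlarges the LLL instance. To force \emph{pointwise} width, one must defeat every \textbf{dangerous} finite family $\V$ of basic open sets, i.e.\ one with $\w(\V) < h$: for such $\V$ the subshift $Y_\V \defeq \bigcap_{\gamma\in\G}\bigcup(\gamma\cdot\V)$ is contained in $\bigcup\V$, hence $\w(Y_\V) \leq \w(\V) < h$, so a random $x$ lies in $Y_\V$ with probability that is exponentially small on every suitable finite window. For each dangerous $\V$ and each $\gamma\in\G$ one adds a \emph{poverty event} asserting that the defining condition of $Y_\V$ holds for $\gamma^{-1}\cdot x$ throughout a large finite window $W(\V)$, the windows and an enumeration of the (countably many) dangerous $\V$ being chosen so that the extra weight deposited at each coordinate by all poverty events is at most a prescribed $\epsilon_0 > 0$; taking $\epsilon_0$ so small that $h_1 + \sigma_{h_1}(\U) + \epsilon_0 < \log_2 k$ and $\epsilon_0 < h_1 - h$ keeps every LLL inequality valid. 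Let $X' \subseteq Z \subseteq X$ be the nonempty subshift of points avoiding all membership and poverty events. Since the poverty events are imposed at \emph{every} position, the guaranteed property — inside every large window, somewhere, no translate of any dangerous $\V$ is realized — is inherited by every point of $\overline{\G\cdot x'}$ and $\overline{x'\cdot\G}$ for $x'\in X'$; thus neither of these orbit closures can be contained in $\bigcup\V$ for a dangerous $\V$, which means every finite cover of either has width $\geq h$. Hence $\w^\ast(X')\geq h$, proving \ep{i}. For \ep{iii}, implement the Moser--Tardos resampling procedure for this instance as a $\G$-equivariant Borel map $\pi\colon [0;1]^\G \to k^\G$, the coordinates of $\omega$ supplying all initial values and resamplings equivariantly; the procedure terminates $\lambda^\G$-a.e.\ with output in $X'$, so $\pi$ is the desired factor map and $\mu \defeq \pi_\ast(\lambda^\G)$ is an invariant probability measure on $X'$. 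For \ep{ii}, given $\epsilon > 0$ and finite $F$, run the \emph{same} instance on each pseudo-action $\alpha_n$ of a sofic approximation: for large $n$, $\alpha_n$ is sufficiently faithful that the membership events and the finitely many poverty events ``visible'' at the relevant scale pull back to genuine local events on $V_n$ with the same weight bookkeeping, so the LLL product bound $\Prob\bigl(\bigcap_i \overline{A_i}\bigr) \geq \prod_i(1-\mathrm{weight}_i)$ yields at least $2^{(\log_2 k - \sigma_{h_1}(\U) - \epsilon_0 - o(1))|V_n|} \geq 2^{(h - o(1))|V_n|}$ colourings $f\colon V_n \to k$ avoiding all of them; each such $f$ is an $(\epsilon,F)$-approximate $X'$-colouring once the scale is taken large enough that the descending intersection defining $X'_F$ has stabilised. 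Thus $h_{\epsilon,F}(X',\alpha_n) \geq h - o(1)$ for all large $n$, so $h(X',\Sigma) \geq h$.

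The main obstacle is the bookkeeping for the poverty events: one needs the total LLL weight they contribute at each coordinate to fall under the fixed budget $\log_2 k - h_1 - \sigma_{h_1}(\U)$, and this hinges on a quantitative form of $\w(Y_\V)\leq\w(\V)$ — that for an appropriate finite window $W$ the number of patterns in $(Y_\V)_W$ is at most $2^{(\w(\V)+o(1))|W|}$ — which has to be established with care, since a naive ball for $W$ is \emph{not} adequate in non-amenable groups, where $W$ must instead be adapted to the shapes of the patterns appearing in $\V$. A secondary, more routine, point is the transfer to sofic approximations in \ep{ii}, where the $\epsilon|V_n|$ non-proper vertices and the poverty constraints relevant at scale $F$ must be controlled uniformly in $n$.
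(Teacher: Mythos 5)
Your overall plan (apply the LLL with weight $\omega(\phi)=2^{-h_1|\phi|}$; handle pointwise width by adjoining extra events; use a measurable/Moser--Tardos version of the LLL for the factor map; transfer the counting argument to sofic approximations) is in the same spirit as the paper, but two of the steps have genuine gaps.

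First, the justification of the Frostman bound $\mu(U_\phi)\leq \d(U_\phi)^{h_1}$ is not right. Adjoining $U_\phi$ itself to the family of bad events would make any LLL solution \emph{avoid} $U_\phi$, i.e.\ it would give $\mu(U_\phi)=0$ for the new measure, not a bound on the old one. What you actually want is the LLL conditional\-/probability (or Moser--Tardos output distribution) estimate $\mathbb P[U_\phi\mid Z]\leq \omega(\phi)$, and while this is standard for \emph{finite} LLL instances, here $\G\cdot\Phi$ is infinite, $\mathbb P[Z]$ is typically $0$, and the measure $\mu$ produced by the measurable LLL is not the uniform conditional, so one cannot just invoke the finite bound. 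The paper avoids this entirely: it derives $\w(X)\geq\b(X)$ without any quantitative measure estimate, by first building $X'$ with $\w^\ast(X')\geq h$ and $\b(X')\geq h>0$ (Lemma~\ref{lemma:pointwise}), concluding $X'\neq\0$ (Lemma~\ref{lemma:using_LLL}), and then using $\w^\ast(X')\leq\w(X')\leq\w(X)$ (Proposition~\ref{prop:easy}).

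Second, your ``poverty events'' are not basic open sets: an event of the form ``$\gamma^{-1}\cdot x$ realizes some pattern of $\V$ at every position of a window $W(\V)$'' is a union over pattern choices, and the $\sigma_h$-bookkeeping for breadth is only defined for families of basic open sets. You correctly identify this as the sticking point, but your proposed fix (windows $W$ adapted to the pattern shapes, with a quantitative $|(Y_\V)_W|$ bound) is more elaborate than needed and is not carried out. The paper's Claim~\ref{claim:additions} dissolves the difficulty cleanly: for each finite $\mathscr F$ with $\w(\mathscr F)<h$, pick $N$ translates $\gamma_1,\dots,\gamma_N$ so far apart that all $\dom(\phi)\gamma_i$ are pairwise disjoint (possible for any infinite $\G$, with no ball or F\o lner condition), and take $\mathscr V\defeq\{\bigcap_i\gamma_i^{-1}\cdot U_i : U_i\in\mathscr F\}$; each member of $\mathscr V$ is then itself a \emph{basic} open set with $\d=\prod\d(U_i)$, and $\sigma_h(\mathscr V)\leq c_1c_2\,N\,\rho_h(\mathscr F)^N\to0$ because $\rho_h(\mathscr F)<1$. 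Avoiding $\bigcup\mathscr V$ directly forbids $\G\cdot x\subseteq\bigcup\mathscr F$, and the symmetric construction with right translates handles $x\cdot\G$. Feeding these families, summed over an enumeration of all such $\mathscr F$ with a geometric $\epsilon_n$ budget, into a single enlarged action-cover $\U'$ is what makes the pointwise-width bound and the remaining LLL inequality both go through.
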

	
	
		With Theorem~\ref{theo:main} in hand, it is easy to derive Theorem~\ref{theo:corl}:
		
	\begin{proof}[\textsc{Proof of Theorem~\ref{theo:corl}}]
		Fix a nonempty shift-invariant open set $U \subseteq k^\G$ and let $h < \log_2 k$. Without loss of generality, we may assume that
		\begin{equation}\label{eq:h_large}
			h > \max \set{(1/2)\log_2 k,\, \w(k^\G \setminus U)}.
		\end{equation}
		Since, trivially, $\b(k^\G) = \log_2 k$, Theorem~\ref{theo:main} applied to $k^\G$ yields a nonempty subshift $X \subseteq k^\G$ such that $\w^\ast(X) \geq h$ and, if $\G$ is sofic, the entropy of $X$ with respect to any sofic approximation is at least $h$. From Proposition~\ref{prop:comparison} and~\eqref{eq:h_large}, it follows that $X$ is free and $X \subseteq U$. Let $Y \subseteq X$ be an arbitrary minimal subshift. Since we also have $\w^\ast(Y) \geq h$, Proposition~\ref{prop:comparison} implies that the Hausdorff dimension and, if $\G$ is amenable, the entropy of $Y$ are at least $h$.
	\end{proof}

	\section{Proof of Theorem~\ref{theo:main}}\label{sec:proof}
	
	\subsection{{The main lemmas}}
	
	For the purposes of the proof, we split Theorem~\ref{theo:main} into two parts.
	
	\begin{lemma}\label{lemma:pointwise}
		Let $X \subseteq k^\G$ be a subshift such that $\b(X) > 0$. Then, for any $h < \b(X)$, there exists a subshift $X' \subseteq X$ such that $\b(X') \geq h$ and $\w^\ast(X') \geq h$.
	\end{lemma}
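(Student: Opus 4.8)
The plan is to build $X'$ as the complement of the union of $\G$-translates of a suitable family of basic open sets. Since $h < \b(X)$, fix an action-cover $\U$ of $k^\G \setminus X$ with $\b(\U) > h$, and then fix $h_1 \in (h, \b(\U))$ with $h_1 + \sigma_{h_1}(\U) < \log_2 k$; set $\delta \defeq \log_2 k - h_1 - \sigma_{h_1}(\U) > 0$. I will enlarge $\U$ to a family $\U^+ \supseteq \U$ with $\sigma_{h_1}(\U^+) < \sigma_{h_1}(\U) + \delta = \log_2 k - h_1$, so that $\b(\U^+) \geq h_1 > h$, and such that $X' \defeq k^\G \setminus \bigcup(\G \cdot \U^+)$ satisfies $\w^\ast(X') \geq h$. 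Since $\U^+ \supseteq \U$ forces $X' \subseteq X$, and since $\U^+$ is an action-cover of $k^\G \setminus X'$ (giving $\b(X') \geq \b(\U^+) \geq h$), this will prove the lemma; I will not verify $X' \neq \0$ here, as that follows a posteriori from $\b(X') \geq h > 0$.

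By compactness, $\w(\overline{\G \cdot x}) < h$ iff $\overline{\G \cdot x} \subseteq \bigcup \mathscr V$ for some finite family $\mathscr V$ of basic open sets with $\rho_h(\mathscr V) < 1$, and likewise for $\overline{x \cdot \G}$. Enumerate all such families as $\mathscr V_1, \mathscr V_2, \ldots$ (countably many; assume each nonempty), and for each $n$ put $Z_n \defeq \set{y : \G \cdot y \subseteq \bigcup \mathscr V_n}$ and $Z_n' \defeq \set{y : y \cdot \G \subseteq \bigcup \mathscr V_n}$. If $\bigcup(\G \cdot \U^+) \supseteq Z_n \cup Z_n'$ for all $n$, then $\w^\ast(X') \geq h$: if $x \in X'$ had $\w(\overline{\G \cdot x}) < h$, then $\overline{\G \cdot x} \subseteq \bigcup \mathscr V_n$ for some $n$, so $x \in Z_n \subseteq \bigcup(\G \cdot \U^+)$, contradicting $x \in X'$; the right-orbit case is symmetric. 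So it suffices to produce, for each $n$, families $\mathscr A_n, \mathscr B_n$ of basic open sets with $Z_n \subseteq \bigcup \mathscr A_n$, $Z_n' \subseteq \bigcup \mathscr B_n$, and with $\sigma_{h_1}$-weights summing over all $n$ to less than $\delta$, and then set $\U^+ \defeq \U \cup \bigcup_n (\mathscr A_n \cup \mathscr B_n)$, for which $\sigma_{h_1}(\U^+) \leq \sigma_{h_1}(\U) + \sum_n (\sigma_{h_1}(\mathscr A_n) + \sigma_{h_1}(\mathscr B_n)) < \sigma_{h_1}(\U) + \delta$.

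The construction of $\mathscr A_n$ — the crux — uses that $\mathscr V_n$ is \emph{strictly subcritical} at the higher threshold $h_1$: as $h_1 > h$ and $\mathscr V_n$ is finite and nonempty, $B_n \defeq \rho_{h_1}(\mathscr V_n) < \rho_h(\mathscr V_n) < 1$. Write $\mathscr V_n = \set{U_{\psi_1}, \ldots, U_{\psi_N}}$ and $D \defeq \bigcup_i \dom \psi_i$; using that $\G$ is infinite, pick, for a large parameter $\ell$, elements $s_0, \ldots, s_{\ell - 1} \in \G$ with the right-translates $D s_0, \ldots, D s_{\ell - 1}$ pairwise disjoint. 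For $\mathbf{j} = (j_0, \ldots, j_{\ell - 1}) \in \set{1, \ldots, N}^\ell$, let $\Psi_{\mathbf{j}} \defeq \bigcup_{t < \ell} (s_t^{-1} \cdot \psi_{j_t})$, a well-defined element of $\finf{\G}{k} \setminus \set{\0}$ with $|\Psi_{\mathbf{j}}| = \sum_{t} |\psi_{j_t}|$, and put $\mathscr A_n \defeq \set{U_{\Psi_{\mathbf{j}}} : \mathbf{j} \in \set{1,\ldots,N}^\ell}$. If $y \in Z_n$, then $s_t \cdot y \in \bigcup \mathscr V_n$ for every $t$, so $s_t \cdot y \in U_{\psi_{j_t}}$ for some $j_t$, whence $y \in U_{\Psi_{\mathbf{j}}}$; thus $Z_n \subseteq \bigcup \mathscr A_n$. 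Using $\sigma_{h_1}(U_\phi) = |\phi| \cdot |\log_2(1 - 2^{-h_1 |\phi|})| \leq C\,|\phi|\,2^{-h_1 |\phi|}$ with $C \defeq (\ln 2 \,(1 - 2^{-h_1}))^{-1}$, one gets
\[
	\sigma_{h_1}(\mathscr A_n) \,\leq\, C \sum_{\mathbf{j}} \Big( \textstyle\sum_{t} |\psi_{j_t}| \Big) \prod_{t} 2^{-h_1 |\psi_{j_t}|} \,=\, C\, \ell\, A_n\, B_n^{\,\ell - 1}, \qquad A_n \defeq \sum_{i} |\psi_i|\, 2^{-h_1 |\psi_i|},
\]
which tends to $0$ as $\ell \to \infty$ since $B_n < 1$; so a large enough $\ell$ makes $\sigma_{h_1}(\mathscr A_n)$ as small as we wish. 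The family $\mathscr B_n$ covering $Z_n'$ is built identically, using the left-translates $s_0 D, \ldots, s_{\ell - 1} D$ and the right shift action — the one spot where the right action is needed, as foreshadowed by the right-invariance of the stabilizer map. Assembling $\U^+$ from $\U$ and all the $\mathscr A_n, \mathscr B_n$ will complete the argument. The one thing to handle carefully is the interplay of the two thresholds: the breadth budget for $\U^+$ is measured at $h_1$ while the width requirement concerns $h$, and it is exactly the gap $h_1 > h$ that renders each $\mathscr V_n$ subcritical at $h_1$ and makes the ``compound pattern'' families $\mathscr A_n, \mathscr B_n$ affordable.
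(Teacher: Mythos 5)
Your argument is correct and follows essentially the same strategy as the paper's proof: enumerate the finite subcritical covers $\mathscr V_n$, attach to each an ``amplified'' family $\mathscr A_n$ (resp.\ $\mathscr B_n$) built from $\ell$ pairwise far-apart translates, whose $\sigma$-weight decays geometrically in $\ell$ because $\rho(\mathscr V_n)<1$, and adjoin all of these to the original action-cover $\U$; this is exactly the content of the paper's Claim~\ref{claim:additions} and its subsequent use. The only presentational difference is your explicit auxiliary threshold $h_1>h$, where the paper instead replaces $h$ by a slightly larger value via an implicit without-loss-of-generality step to arrange $h+\sigma_h(\U)<\log_2 k$ at a single threshold.
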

	
	Note that Lemma~\ref{lemma:pointwise} does not yet guarantee that $X' \neq \0$ (as $\w^\ast(\0) = + \infty$). This is taken care of in Lemma~\ref{lemma:using_LLL}:
	
	\begin{lemma}\label{lemma:using_LLL}
		Let $X \subseteq k^\G$ be a subshift such that $\b(X) > 0$. Then $X \neq \0$; moreover,
		\begin{enumerate}[label=\normalfont\ep{\roman*},align=left,labelindent=\parindent,leftmargin=*, itemsep=\parskip]
			\item\label{item:lemma:sofic} if $\G$ is sofic, then the entropy of $X$ with respect to any sofic approximation is at least~$\b(X)$;
			\item\label{item:lemma:factor} there exist an invariant probability measure $\mu$ on $X$ and a factor map \[\pi \colon ([0;1]^\G, \lambda^\G) \to (X, \mu).\]
		\end{enumerate}
	\end{lemma}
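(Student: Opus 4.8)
The plan is to apply the Lov\'asz Local Lemma in three separate settings: over the probability space $k^\G$, to show $X \neq \0$; over each finite vertex set $V_n$ of a sofic approximation, to bound the entropy; and in an equivariant, measure-theoretic form over a Bernoulli source, to produce the factor map. The combinatorial core is the same throughout. Fix $h < \b(X)$, choose an action-cover $\U$ of $k^\G \setminus X$ with $\b(\U) > h$, and pick $h_0 \in (h, \b(\U))$, so that $h_0 + \sigma_{h_0}(\U) < \log_2 k$ and, in particular, $\sigma_{h_0}(\U) < \infty$. To a pair consisting of a ``position'' $\gamma$ and a set $U_\phi \in \U$ we attach the ``bad event'' that the $\gamma$-translate of a uniformly random $k$-coloring extends $\phi$; this event has probability $k^{-|\phi|} = \d(U_\phi)^{\log_2 k}$, and we assign to it the LLL weight $\d(U_\phi)^{h_0}$. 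For a fixed such event and a fixed $U_{\phi'} \in \U$, at most $|\phi| \cdot |\phi'|$ positions $\gamma'$ yield an event for $(\gamma', U_{\phi'})$ that is dependent with it; combining this with the identity $|\phi'| \cdot \bigl(-\log_2(1 - \d(U_{\phi'})^{h_0})\bigr) = \sigma_{h_0}(U_{\phi'})$, the LLL hypothesis $\d(U_\phi)^{\log_2 k} \le \d(U_\phi)^{h_0} \prod (1 - \d(U_{\phi'})^{h_0})$ --- after dividing by $\d(U_\phi)^{h_0}$ and taking $\log_2$ --- reduces to $\log_2 k - h_0 \ge \sigma_{h_0}(\U)$, which holds by the choice of $h_0$. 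Applying this with the events $B_{\gamma, \phi}$ ($\gamma \in \G$, $U_\phi \in \U$) asserting $\gamma \cdot x \in U_\phi$ for a random $x \in k^\G$ with independent uniform coordinates, the LLL gives $\Prob\bigl[\bigcap_{\gamma, \phi} B_{\gamma,\phi}\c\bigr] > 0$; any $x$ in this event avoids every translate of every member of $\U$, hence lies in $X$, so $X \neq \0$.

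For the entropy bound, fix a sofic approximation $\Sigma = (\alpha_n \colon \G \pacts V_n)$, a finite set $F \subseteq \G$, and $\epsilon > 0$; it suffices to produce at least $2^{h|V_n|}$ functions $f \colon V_n \to k$ that are $(\epsilon, F)$-approximate $X$-colorings of $\alpha_n$ for all large $n$. The first step is a compactness reduction: $k^F \setminus X_F$ is finite, and for each $\psi$ in it the basic clopen set $U_\psi$ is a compact subset of the open set $\bigcup(\G \cdot \U)$, so there are a \emph{finite} $S \subseteq \G$ and a \emph{finite} $\U' \subseteq \U$ with $U_\psi \subseteq \bigcup_{\gamma \in S,\, U \in \U'} \gamma \cdot U$ for every $\psi \in k^F \setminus X_F$; hence $\rest{\pi_f(v)}{F} \notin X_F$ forces $\pi_f(v) \in \gamma \cdot U_\phi$ for some $\gamma \in S$, $U_\phi \in \U'$. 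The second step is a charging argument: for $v$ proper on a large enough finite window $F' \supseteq F$, the relation $\pi_f(v) \in \gamma \cdot U_\phi$ says, via the local equivariance of $\pi_f$, that $w \defeq \gamma^{-1} \cdot v$ satisfies the bad event $B_{w,\phi}$ that $f(a \cdot w) = \phi(a)$ for all $a \in \dom\phi$; since $\gamma$ ranges only over the finite set $S$, each pair $(w, \phi)$ is charged by at most $|S|$ vertices $v$, and therefore the number of $v \in \Prop_{F'}(\alpha_n)$ with $\rest{\pi_f(v)}{F} \notin X_F$ is at most $|S|$ times the number of bad pairs $(w, \phi)$ with $w \in \Prop_D(\alpha_n)$ and $U_\phi \in \U'$, where $D \defeq \bigcup_{U_\phi \in \U'} \dom\phi$. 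Now apply the LLL to the \emph{finite} family $\set{B_{w,\phi} \,:\, w \in \Prop_D(\alpha_n),\ U_\phi \in \U'}$ with weights $\d(U_\phi)^{h_0}$ (the verification is the one above with $\U'$ in place of $\U$, valid since $\sigma_{h_0}(\U') \le \sigma_{h_0}(\U)$); crucially, the LLL also supplies the lower bound $\Prob\bigl[\text{no } B_{w,\phi} \text{ holds}\bigr] \ge \prod (1 - \d(U_\phi)^{h_0}) \ge 2^{(h_0 - \log_2 k)|V_n|}$, the last inequality using $|\phi| \ge 1$ to pass from $\sum_{U_\phi \in \U'} \log_2(1 - \d(U_\phi)^{h_0})$ to $-\sigma_{h_0}(\U') > h_0 - \log_2 k$. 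Consequently at least $k^{|V_n|} \cdot 2^{(h_0 - \log_2 k)|V_n|} = 2^{h_0 |V_n|} \ge 2^{h |V_n|}$ colorings $f$ avoid every $B_{w,\phi}$, and by the charging bound together with $|V_n \setminus \Prop_{F'}(\alpha_n)| \le \epsilon |V_n|$ (soficity, for large $n$) each such $f$ is an $(\epsilon, F)$-approximate $X$-coloring. Taking $\inf_{\epsilon, F} \limsup_n$ and then letting $h \uparrow \b(X)$ gives $h(X, \Sigma) \ge \b(X)$.

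For the invariant measure and factor map, the first construction is run equivariantly and measurably. Identifying a Bernoulli source isomorphic to $([0;1]^\G, \lambda^\G)$ with an independent family of $[0;1]$-labels indexed by $\G$ (with room to spare for resampling), one invokes a shift-equivariant measurable form of the LLL --- e.g.\ an equivariantly coded Moser--Tardos-type resampling process, whose almost-sure termination is guaranteed by the same strict inequality $h_0 + \sigma_{h_0}(\U) < \log_2 k$ --- to obtain a shift-equivariant measurable map $\pi$ from the source into $\bigcap_{\gamma, \phi} B_{\gamma,\phi}\c \subseteq X$. Then $\mu \defeq \pi_\ast(\lambda^\G)$ is a shift-invariant Borel probability measure on $X$, and $\pi \colon ([0;1]^\G, \lambda^\G) \to (X, \mu)$ is a factor map; this also re-proves $X \neq \0$.

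I expect the measure-theoretic step to be the main obstacle: the bare probabilistic LLL does not by itself furnish an equivariant measurable solution, so the argument has to be arranged so that the strict slack $h_0 < \b(\U)$ converts into summability --- hence almost-sure termination --- of an explicit equivariant process. A secondary point of care is in the sofic part: since each $\alpha_n$ is merely a pseudo-action, the charging argument and the events $B_{w,\phi}$ must be set up inside $\Prop_{F'}(\alpha_n)$ for a carefully chosen finite $F'$ (large enough that $\gamma^{-1} \cdot v$ is $D$-proper whenever $v$ is $F'$-proper and $\gamma \in S$), with the asymptotically negligible non-proper vertices absorbed into the tolerance $\epsilon |V_n|$.
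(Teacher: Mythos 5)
Your proposal follows essentially the same three-step strategy as the paper: the finite-alphabet LLL (with weights $\omega(\phi)=\d(U_\phi)^{h_0}=2^{-h_0|\phi|}$ and the same $|\phi||\phi'|$ dependency bound) gives nonemptiness; a copy of that LLL over the finite sets of a sofic approximation gives the entropy bound; and a measurable/equivariant form of the LLL over the Bernoulli shift gives the factor map. The arithmetic verification of the LLL hypothesis, including the reduction to $\log_2 k - h_0 \ge \sigma_{h_0}(\U)$, matches the paper's Claim~\ref{claim:breadth_LLL}, and the appeal to a measurable LLL black box is exactly what the paper does via Theorem~\ref{theo:meas_LLL} (i.e.\ \cite[Corollary~6.7]{Bernshteyn}); you correctly identify this as the point where the bare probabilistic LLL does not suffice.

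The one place where your write-up has a real gap is the dependency count in the sofic part. You apply the LLL to the family $\set{B_{w,\phi} : w\in\Prop_D(\alpha_n),\ U_\phi\in\U'}$ with $D=\bigcup_{U_\phi\in\U'}\dom\phi$ and assert that the neighborhood bound ``is the one above with $\U'$ in place of $\U$.'' But in a pseudo-action the map $w'\mapsto a'\cdot w'$ need not be injective, so from $a\cdot w=a'\cdot w'$ you cannot in general recover $w'$ from the pair $(a,a')$: to invert you need $w'$ to satisfy $(a')^{-1}\cdot(a'\cdot w')=w'$, which requires $w'$ to be proper on a set containing $a'$, $(a')^{-1}$, and $\mathbf 1$. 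Your $D$ is neither symmetric nor required to contain $\mathbf 1$, so $D$-properness of $w'$ does not deliver the bound of at most $|\phi||\phi'|$ dependent events per $\phi'$, and without that bound the LLL hypothesis is unverified. This is precisely the role of $S^3$-properness (with $S$ symmetric and $\mathbf 1\in S$) in the paper's Claim~\ref{claim:tau}, where an explicit injection $\iota\colon\Nbhd(\phi_v,\Phi_\alpha)\to\Nbhd(\phi,\G\cdot\Phi)$ is built and its injectivity proved by the identity $\psi_u=(\iota(\psi_u))_v$. The fix in your framework is small --- replace $D$ by a symmetric set containing $\mathbf 1$ and $D\cup D^{-1}$, and enlarge $F'$ accordingly --- but as written the step is missing, and it is the only genuinely pseudo-action-specific part of the LLL verification.

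A minor point: your ``charging argument'' is unnecessarily indirect. Once $f$ avoids every $B_{w,\phi}$ in the finite family, the conclusion for each $F'$-proper $v$ is not an inequality to be charged but a direct implication ($\rest{\pi_f(v)}{F}\in X_F$, else a forbidden $B_{\gamma^{-1}\cdot v,\phi}$ would occur), so the factor $|S|$ plays no role; this is the paper's Claim~\ref{claim:col}.
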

	
	It is clear that Lemmas~\ref{lemma:pointwise} and \ref{lemma:using_LLL} combined yield Theorem~\ref{theo:main}.
	We prove Lemma~\ref{lemma:pointwise} in \S\ref{subsec:pointwise} by constructing the required subshift $X'$ explicitly. The proof of Lemma~\ref{lemma:using_LLL} crucially relies on the~LLL. We briefly review the~LLL in \S\ref{subsec:LLL} and then prove Lemma~\ref{lemma:using_LLL} in \S\ref{subsec:using_LLL}.
	
	\subsection{{Proof of Lemma~\ref{lemma:pointwise}}}\label{subsec:pointwise}
	
	\begin{claim}\label{claim:additions}
		Let $\mathscr{F}$ be a finite family of basic open sets with $\w(\mathscr{F}) < h$. Then, for any $\epsilon > 0$, there exist families $\mathscr{V}$ and $\mathscr{W}$ of basic open sets such that
		\[
			\sigma_h(\mathscr{V}) \,< \, \epsilon \qquad \text{and} \qquad \sigma_h(\mathscr{W}) \,< \, \epsilon,
		\]
		and for all $x \in k^\G \setminus \bigcup \mathscr{V}$ and $y \in k^\G \setminus \bigcup\mathscr{W}$, we have
		\[
			\G \cdot x \,\not\subseteq\, \bigcup \mathscr{F} \qquad \text{and} \qquad y \cdot \G \,\not\subseteq\, \bigcup \mathscr{F}.
		\]
	\end{claim}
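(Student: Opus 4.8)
The plan is to prove the two assertions (about $\mathscr{V}$ and $\mathscr{W}$) by essentially the same argument, so I will focus on constructing $\mathscr{V}$; the construction of $\mathscr{W}$ is symmetric, using the right shift action in place of the left shift action. The key idea is that $\w(\mathscr{F}) < h$ means $\rho_h(\mathscr{F}) = \sum_{U \in \mathscr{F}} \d(U)^h < 1$, i.e.\ there is some $\theta < 1$ with $\rho_h(\mathscr{F}) \le \theta$. A point $x$ satisfies $\G \cdot x \subseteq \bigcup \mathscr{F}$ exactly when, for every $\gamma \in \G$, the translate $\gamma \cdot x$ lies in some $U \in \mathscr{F}$, equivalently $x$ lies in $\gamma^{-1} \cdot U$. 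So the set $B$ of ``bad'' points $x$ with $\G \cdot x \subseteq \bigcup \mathscr{F}$ is contained in $\bigcap_{\gamma \in \G} \bigcup_{U \in \mathscr{F}} (\gamma^{-1}\cdot U)$. Since each $U \in \mathscr{F}$ is of the form $U_\phi$ with $\d(U) = 2^{-|\phi|}$, under the uniform measure $\nu^\G$ on $k^\G$ we have $\nu^\G(U) = k^{-|\phi|} = \d(U)^{\log_2 k} \le \d(U)^h$ when $h \le \log_2 k$ — but for a cleaner bound I would instead want to cover $B$ by a family of basic open sets of small $\sigma_h$-mass.

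**The core construction.** Here is the concrete plan for $\mathscr{V}$. Enumerate $\G = \{\gamma_0, \gamma_1, \dots\}$. For each finite $n$, consider the family $\mathscr{F}_n \defeq \bigcup_{i < n} (\gamma_i^{-1} \cdot \mathscr{F})$ — these are still basic open sets (the shift preserves $\d$). A point $x$ with $\G \cdot x \subseteq \bigcup\mathscr{F}$ must, for each $i$, lie in some set of $\gamma_i^{-1}\cdot\mathscr{F}$; in particular $x$ lies in a set of the form $U_{\psi}$ where $\psi$ is a ``product'' $\psi = \psi_0 \cup \psi_1 \cup \dots \cup \psi_{n-1}$ with each $\psi_i \in \gamma_i^{-1} \cdot \mathscr{F}$ (provided these agree as partial functions — if they conflict, that branch is empty). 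The family $\mathscr{V}_n$ of all such consistent unions $U_\psi$ covers $B$. I claim $\rho_h(\mathscr{V}_n) \le \theta^{?}$ decays: if the domains of the $\psi_i$ were disjoint, $\d(U_\psi)^h = \prod_i \d(U_{\psi_i})^h$ and summing over choices gives $\rho_h(\mathscr{V}_n) \le \rho_h(\mathscr{F})^n \le \theta^n$; overlaps only make $|\psi|$ smaller, hence $\d(U_\psi)^h$ larger, so a little care is needed — one restricts to a sufficiently sparse subsequence of the $\gamma_i$ (chosen so that the relevant translates of $\dom \mathscr{F}$ are pairwise disjoint, which is possible since $\G$ is infinite and $\dom\mathscr{F}$ is finite) to genuinely get the product bound. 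Having fixed such a sparse infinite set of indices, $\rho_h(\mathscr{V}_n) \le \theta^n \to 0$. Finally, $B \subseteq \bigcup \mathscr{V}_n$ is not quite enough — I want a \emph{single} family — but $B \subseteq \bigcup\mathscr{V}_n$ for \emph{every} $n$, so I may simply pick $n$ large enough that $\theta^n$ times the conversion constant between $\rho_h$ and $\sigma_h$ (recall $\sigma_h(U) \approx \log_2 e \cdot |\log_2 \d(U)| \cdot \d(U)^h$, which is summable-dominated by a constant multiple of $\rho_h$ when the $|\log_2 \d(U)|$ factors are controlled) is below $\epsilon$, and set $\mathscr{V} \defeq \mathscr{V}_n$ for that $n$. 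Then any $x \notin \bigcup\mathscr{V}$ satisfies $x \notin B$, i.e.\ $\G\cdot x \not\subseteq \bigcup\mathscr{F}$, as required.

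**The $\sigma_h$ versus $\rho_h$ bookkeeping.** The one genuinely fiddly point is converting the $\rho_h$ bound into a $\sigma_h$ bound, since $\sigma_h$ carries the extra factor $|\log_2 \d(U_\psi)| = |\psi| \log_2\text{(base)}$, which \emph{grows} with $n$. So $\sigma_h(\mathscr{V}_n)$ is not simply bounded by a constant times $\rho_h(\mathscr{V}_n)$. The fix: with the sparse-index product structure, $\sigma_h(U_\psi) = \log_2\d(U_\psi)\cdot\log_2(1-\d(U_\psi)^h)$ and using $|\log_2(1-t)| \le C t$ for $t$ bounded away from $1$ gives $\sigma_h(U_\psi) \lesssim |\psi|\cdot\d(U_\psi)^h = \big(\sum_i |\psi_i|\big)\prod_i \d(U_{\psi_i})^h$. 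Summing over all branches and using $\sum_i |\psi_i| \le n \cdot \max|\phi|$ over $\phi \in \mathscr{F}$ yields $\sigma_h(\mathscr{V}_n) \lesssim n\,\theta^n \to 0$. Since $n\theta^n \to 0$ as $n \to \infty$, choosing $n$ large makes both $\sigma_h(\mathscr{V}) < \epsilon$ and the covering property hold simultaneously. The symmetric construction with $\mathscr{W}$ uses $x \cdot \gamma_i$ in place of $\gamma_i \cdot x$, equivalently replacing each $U_\phi$ by $U_\phi \cdot \gamma_i^{-1}$, and is verbatim the same.

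I expect the main obstacle to be precisely this last bookkeeping: making the disjointification of domains rigorous (choosing the sparse index set so the product bound is exact, not just asymptotic) and pushing it through the $\sigma_h$ estimate with its logarithmic weight, rather than the conceptual skeleton, which is just ``$\w(\mathscr{F}) < h$ forces the bad set to be covered by high powers of a sub-one quantity.''
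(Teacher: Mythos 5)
Your proposal is correct and matches the paper's argument: choose $\gamma_1, \ldots, \gamma_N$ so that the translated domains $\dom(\phi)\gamma_i$ are pairwise disjoint, form $\mathscr{V}$ as the family of $N$-fold intersections $\bigcap_{i=1}^N(\gamma_i^{-1}\cdot U_i)$ over $U_1,\ldots,U_N\in\mathscr{F}$, and bound $\sigma_h(\mathscr{V}) \le c_1 c_2\, N\, \rho_h(\mathscr{F})^N \to 0$, where $c_1,c_2$ play exactly the roles of your $\max|\phi|$ and the linear bound on $|\log_2(1-t)|$ for $t\in[0;2^{-h}]$. The only cosmetic difference is that the paper fixes a single large $N$ and picks the disjointifying elements for that $N$ from the outset, rather than first introducing an infinite sparse subsequence and then truncating.
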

	\begin{proof}[\textsc{Proof}]
		Below we only describe the construction of the family $\mathscr{V}$, as the family $\mathscr{W}$ is built in virtually the same way, the only difference being the use of the right instead of the left shift action.
		
		Let $\Phi \subset \finf{\G}{k}$ be the \ep{finite} set such that $\mathscr{F} = \set{U_\phi \,:\, \phi \in \Phi}$ and let $\epsilon > 0$. Let $N \in \N$ be a large integer (to be chosen later). Since $\G$ is infinite, we can find $N$ elements $\gamma_1$, \ldots, $\gamma_N \in \G$ such that for all $\phi$, $\psi \in \Phi$ and $1\leq i < j \leq N$, we have $\dom(\phi)\gamma_i \cap \dom(\psi)\gamma_j = \0$.
		This implies that for all $U_1$, \ldots, $U_N \in \mathscr{F}$, the set $\bigcap_{i=1}^N(\gamma_i^{-1} \cdot U_i)$ is basic open with
		\begin{equation}\label{eq:product}
		\d\left(\bigcap_{i=1}^N(\gamma_i^{-1} \cdot U_i)\right) \,=\, \prod_{i = 1}^N \d(U_i).
		\end{equation}
		We claim that the family
		$\mathscr{V} \defeq \set{\,\textstyle\bigcap\nolimits_{i=1}^N(\gamma_i^{-1} \cdot U_i) \,:\, U_1, \ldots, U_N \in \mathscr{F}}
		$ is as desired.
		
		Suppose that some $x \in k^\G \setminus \bigcup \mathscr{V}$ satisfies $\G \cdot x \subseteq \bigcup \mathscr{F}$. Then we can choose $U_i \in \mathscr{F}$ for each $1 \leq i \leq N$ so that $\gamma_i \cdot x \in U_i$. But this yields $x \in \bigcap_{i=1}^N(\gamma_i^{-1} \cdot U_i) \in \mathscr{V}$, which is a contradiction. Hence, it only remains to show that, if $N$ is large enough, then $\sigma_h(\mathscr{V}) < \epsilon$. To that end, consider an arbitrary sequence $U_1$, \ldots, $U_N \in \mathscr{F}$. By \eqref{eq:product}, we have
		\[
		\sigma_h\left(\bigcap_{i=1}^N(\gamma_i^{-1} \cdot U_i)\right) \,=\, \log_2 \prod_{i = 1}^N \d(U_i) \cdot \log_2\left(1-\prod_{i = 1}^N \d(U_i)^h\right).
		\]
		Let $c_1 \defeq \max\set{|\log_2\d(U)|\,:\,U \in \mathscr{F}}$ and $c_2 \defeq 2^h |\log_2(1-2^{-h})|$. (Note that the values $c_1$ and $c_2$ do not depend on $N$.) We have
		\[
		\left|\log_2 \prod_{i = 1}^N \d(U_i) \right| \,=\, \left|\sum_{i=1}^N\log_2 \d(U_i) \right| \,\leq\, c_1N,
		\]
		and, since $|\log_2(1-a)| \leq c_2 \cdot a$ for all $a \in [0;2^{-h}]$, we also have
		\[
		\log_2\left(1-\prod_{i = 1}^N \d(U_i)^h\right) \,\leq\, c_2 \cdot \prod_{i=1}^N \d(U_i)^h.
		\]
		Therefore,
		\[
		\sigma_h\left(\bigcap_{i=1}^N(\gamma_i^{-1} \cdot U_i)\right) \,\leq\, c_1c_2 \cdot N \cdot \prod_{i=1}^N \d(U_i)^h.
		\]
		Since $\w(\mathscr{F}) < h$, we have $\rho_h(\mathscr{F}) < 1$, and hence
		\[
		\sigma_h(\mathscr{V}) \,\leq\, c_1c_2 \cdot N \cdot \sum_{U_1, \ldots, U_N \in \mathscr{F}} \,\prod_{i=1}^N \d(U_i)^h \,=\, c_1c_2 \cdot N \cdot \rho_h(\mathscr{F})^N  \,\xrightarrow[N \to \infty]{}\, 0. \qedhere
		\]
	\end{proof}
	
	Let $X \subseteq k^\G$ be a subshift such that $\b(X) > 0$ and let $h < \b(X)$. We may assume that $h > 0$ and that there exists an action-cover $\U$ of $k^\G \setminus X$ such that $h + \sigma_h(\U) < \log_2 k$. Let $\mathscr{F}_0$, $\mathscr{F}_1$, \ldots{} be an arbitrary enumeration of all the finite families $\mathscr{F}$ of basic open sets satisfying $\w(\mathscr{F}) < h$. For each $n \in \N$, let $\mathscr{V}_n$ and $\mathscr{W}_n$ be the families given by Claim~\ref{claim:additions} applied to the family  $\mathscr{F}_n$ with
	\[
		\epsilon_n \defeq \frac{\log_2 k - h - \sigma_h(\U)}{2^{n+2}}.
	\]
	Set $\U' \defeq \U \cup \bigcup_{n = 0}^\infty (\mathscr{V}_n \cup \mathscr{W}_n)$. Then the subshift $X' \defeq k^\G \setminus (\G \cdot \U')$ is as desired. Indeed, since $\U$ is an action-cover of $k^\G \setminus X$, we have $X' \subseteq k^\G \setminus (\G \cdot \U) \subseteq X$. Since
	\begin{align*}
		h \,+\, \sigma_h(\U) \,&+\, \sum_{n=0}^\infty (\sigma_h(\mathscr{V}_n) \,+\, \sigma_h(\mathscr{W}_n)) \\
		<\, h \,+\, \sigma_h(\U) \,&+\, \sum_{n=0}^\infty \frac{\log_2 k - h - \sigma_h(\U)}{2^{n+1}} \,=\, \log_2 k,
	\end{align*}
	we conclude that $\b(X') \geq h$. Finally, if $x \in k^\G$ satisfies $\w(\overline{\G \cdot x}) < h$ or $\w(\overline{x \cdot \G}) < h$, then there exists an index $n \in \N$ such that $\G \cdot x \subseteq \bigcup \mathscr{F}_n$ or $x \cdot \G \subseteq \bigcup \mathscr{F}_n$. By the choice of $\mathscr{V}_n$ and $\mathscr{W}_n$, such $x$ cannot belong to $X'$, and hence $\w^\ast(X') \geq h$. The proof of Lemma~\ref{lemma:pointwise} is complete.

	\subsection{{The Lov\'asz Local Lemma}}\label{subsec:LLL}
	
	The LLL was first introduced by Erd\H os and Lov\'asz in~\cite{EL}. It is usually stated probabilistically:
	
	\begin{theo}[{Erd\H os--Lov\'asz, \textls{Lov\'asz Local Lemma} \cite[Lemma 5.1.1]{AS}}]\label{theo:LLL}
		Let $\B$ be a finite collection of random events in a probability space $\Omega$. For each $B \in \B$, let $\Nbhd(B) \subseteq \B \setminus \set{B}$ be a subset such that $B$ is independent from the algebra generated by $\B \setminus (\Nbhd(B) \cup \set{B})$. Suppose that a function $\omega \colon \B \to [0;1)$ satisfies
		\[
			\mathbb{P}[B] \,\leq\, \omega(B) \prod_{B' \in \Nbhd(B)} (1- \omega(B')) \qquad \text{for all } B \in \B.
		\]
		Then
		\[
			\mathbb{P}\left[\bigwedge_{B \in \B} \neg B\right] \,\geq\, \prod_{B \in \B} (1- \omega(B)) \,>\, 0.
		\]
	\end{theo}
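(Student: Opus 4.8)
The statement is the classical asymmetric Lov\'asz Local Lemma, and the plan is to prove it by the standard induction on the number of events being conditioned on. Concretely, I would establish by strong induction on $|S|$ the following assertion: \emph{for every $S \subseteq \B$, the event $\bigwedge_{B' \in S} \neg B'$ has strictly positive probability, and for every $B \in \B \setminus S$ one has $\mathbb{P}\left[B \,\middle|\, \bigwedge_{B' \in S} \neg B'\right] \leq \omega(B)$.} Granting this, the theorem is immediate: writing $\B = \set{B_1, \dots, B_m}$ and applying the chain rule for conditional probabilities,
\[
\mathbb{P}\left[\bigwedge_{i=1}^m \neg B_i\right] \,=\, \prod_{i=1}^m \mathbb{P}\left[\neg B_i \,\middle|\, \bigwedge_{j<i} \neg B_j\right] \,=\, \prod_{i=1}^m \left(1 - \mathbb{P}\left[B_i \,\middle|\, \bigwedge_{j<i} \neg B_j\right]\right) \,\geq\, \prod_{i=1}^m (1 - \omega(B_i)) \,>\, 0,
\]
where each conditioning event is legitimate thanks to the positivity half of the assertion.

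For the base case $S = \0$, the hypothesis gives $\mathbb{P}[B] \leq \omega(B) \prod_{B' \in \Nbhd(B)} (1 - \omega(B')) \leq \omega(B)$, since each factor lies in $(0;1]$, and $\mathbb{P}[\Omega] = 1 > 0$. For the inductive step, I would first deduce the positivity of $\mathbb{P}[\bigwedge_{B' \in S} \neg B']$ by the chain rule applied to an enumeration of $S$: every factor equals $1 - \mathbb{P}[B' \,|\, \cdots] \geq 1 - \omega(B') > 0$ by the inductive hypothesis applied to the strictly smaller conditioning sets. To prove the bound, fix $B \in \B \setminus S$ and split $S = S_1 \sqcup S_2$ with $S_1 \defeq S \cap \Nbhd(B)$ and $S_2 \defeq S \setminus \Nbhd(B)$. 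If $S_1 = \0$, then $\mathbb{P}[B \,|\, \bigwedge_{B' \in S} \neg B'] = \mathbb{P}[B] \leq \omega(B)$, because $B$ is independent of the algebra generated by $\B \setminus (\Nbhd(B) \cup \set{B}) \supseteq S$. Otherwise $|S_2| < |S|$, so $\mathbb{P}[\bigwedge_{B'' \in S_2} \neg B''] > 0$, and I would write
\[
\mathbb{P}\left[B \,\middle|\, \bigwedge_{B' \in S} \neg B'\right] \,=\, \frac{\mathbb{P}\left[B \wedge \bigwedge_{B' \in S_1} \neg B' \,\middle|\, \bigwedge_{B'' \in S_2} \neg B''\right]}{\mathbb{P}\left[\bigwedge_{B' \in S_1} \neg B' \,\middle|\, \bigwedge_{B'' \in S_2} \neg B''\right]}.
\]
The numerator is at most $\mathbb{P}\left[B \,\middle|\, \bigwedge_{B'' \in S_2} \neg B''\right] = \mathbb{P}[B] \leq \omega(B) \prod_{B' \in \Nbhd(B)} (1 - \omega(B'))$ by the independence hypothesis; the denominator, after expanding over an enumeration $S_1 = \set{B_1', \dots, B_r'}$ via the chain rule and applying the inductive bound to each (strictly smaller) conditioning set, is at least $\prod_{j=1}^r (1 - \omega(B_j')) \geq \prod_{B' \in \Nbhd(B)} (1 - \omega(B'))$, the last inequality because $S_1 \subseteq \Nbhd(B)$ and all factors lie in $(0;1]$. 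Dividing cancels the two products over $\Nbhd(B)$, leaving $\omega(B)$, which closes the induction.

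This is a textbook argument, and I do not anticipate any genuine obstacle; the only delicate points are bookkeeping ones — organizing the induction so that it simultaneously yields the probability bound and the strict positivity of every conditioning event that appears, and invoking the independence assumption exactly for the events lying outside $\Nbhd(B)$. The write-up should be short.
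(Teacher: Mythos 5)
The paper does not prove this statement; it cites it as the classical Lovász Local Lemma from Alon and Spencer's book. Your proposal is the standard textbook induction on the size of the conditioning set—it is correct, complete, and is precisely the argument found in the reference the paper cites, so there is nothing to contrast.
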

	
	The LLL is often used in the form of the following corollary. Let $X$ be a set and let $\Phi \subseteq \finf{X}{k}$. Let $\Forb(\Phi)$ denote the set of all maps $f \colon X \to k$ such that $f \not\supseteq \phi$ for all $\phi \in \Phi$. For each $\phi \in \Phi$, let
	\[
		\Nbhd(\phi, \Phi) \,\defeq\, \set{\psi \in \Phi \,:\, \dom(\phi) \cap \dom(\psi) \neq \0}. 
	\]
	We say that $\Phi$ is \emphd{correct \ep{for the LLL}} if there is a function $\omega \colon \Phi \to [0;1)$ such that
	\[
		k^{-|\phi|} \,\leq\, \omega(\phi) \prod_{\psi \in \Nbhd(\phi, \Phi)} (1 - \omega(\psi)) \qquad \text{for all } \phi \in \Phi.
	\]
	In this case $\omega$ is called a \emphd{witness} to the correctness of $\Phi$.
	
	\begin{corl}\label{corl:LLL}
		Let $X$ be a set and let $\Phi \subseteq \finf{X}{k}$. If the set $\Phi$ is correct, then $\Forb(\Phi) \neq \0$. Furthermore, if $X$ is finite and $\omega \colon \Phi \to [0;1)$ is a witness to the correctness of $\Phi$, then
		\[
			|\Forb(\Phi)| \,\geq\, k^{|X|} \prod_{\phi \in \Phi} (1 - \omega(\phi)).
		\]
	\end{corl}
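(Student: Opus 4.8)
The plan is to deduce the statement from the Lov\'asz Local Lemma (Theorem~\ref{theo:LLL}) in the case $|X| < \infty$ and then pass to arbitrary $X$ by a compactness argument.

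First I would treat the finite case. Assume $X$ is finite, so that $\finf{X}{k}$, and hence $\Phi$, is finite. I would work in the uniform product space $\Omega = k^X$ --- equivalently, choose $f \colon X \to k$ with the values $f(x)$, $x \in X$, independent and uniformly distributed in $k$ --- and for each $\phi \in \Phi$ let $B_\phi$ be the event $\set{f \in k^X \,:\, f \supseteq \phi}$, so that $\mathbb{P}[B_\phi] = k^{-|\phi|}$. Since $B_\phi$ depends only on $\rest{f}{\dom(\phi)}$, the event $B_\phi$ is independent of the algebra generated by the events $B_\psi$ with $\dom(\psi) \cap \dom(\phi) = \0$; that is, $\Nbhd(\phi, \Phi)$ is a legitimate dependency neighbourhood of $B_\phi$. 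The correctness inequality for the witness $\omega$ is then precisely the hypothesis of Theorem~\ref{theo:LLL}, which yields
\[
\mathbb{P}\Bigl[\textstyle\bigwedge_{\phi \in \Phi} \neg B_\phi\Bigr] \,\geq\, \prod_{\phi \in \Phi}(1 - \omega(\phi)) \,>\, 0.
\]
Because $\bigwedge_{\phi \in \Phi} \neg B_\phi$ is exactly the event $\set{f \in \Forb(\Phi)}$ and $\Omega$ is uniform over $k^{|X|}$ points, this reads $|\Forb(\Phi)| = k^{|X|}\, \mathbb{P}[\bigwedge_{\phi} \neg B_\phi] \geq k^{|X|} \prod_{\phi \in \Phi}(1-\omega(\phi))$; in particular $\Forb(\Phi) \neq \0$. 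This settles both assertions when $X$ is finite.

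For general $X$, I would first observe that correctness is inherited by finite subfamilies: if $\Phi_0 \subseteq \Phi$, then $\Nbhd(\phi, \Phi_0) \subseteq \Nbhd(\phi, \Phi)$ and every factor $1 - \omega(\psi)$ lies in $(0;1]$, so $\prod_{\psi \in \Nbhd(\phi, \Phi_0)}(1-\omega(\psi)) \geq \prod_{\psi \in \Nbhd(\phi, \Phi)}(1-\omega(\psi))$ for each $\phi \in \Phi_0$, and hence $\rest{\omega}{\Phi_0}$ witnesses the correctness of $\Phi_0$. Given a finite $\Phi_0 \subseteq \Phi$, I would set $X_0 \defeq \bigcup_{\phi \in \Phi_0} \dom(\phi) \in \fins{X}$, regard $\Phi_0$ as a correct subset of $\finf{X_0}{k}$, and apply the finite case to obtain some $f_0 \in \Forb(\Phi_0) \cap k^{X_0}$; any extension of $f_0$ to $X$ then avoids every $\phi \in \Phi_0$, since $\dom(\phi) \subseteq X_0$. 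Thus every finite subfamily of $\set{\set{f \in k^X \,:\, f \not\supseteq \phi} \,:\, \phi \in \Phi}$ has nonempty intersection. Each set $\set{f \in k^X \,:\, f \not\supseteq \phi}$ is closed, being the complement of a basic clopen cylinder, and $k^X$ is compact, so the finite intersection property gives $\Forb(\Phi) = \bigcap_{\phi \in \Phi}\set{f \in k^X \,:\, f \not\supseteq \phi} \neq \0$.

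I do not anticipate a genuine obstacle here: the corollary is in essence a restatement of the LLL probability estimate inside the uniform product measure, and the only mildly delicate point is the monotonicity remark that passing to a finite subfamily preserves correctness, which is exactly what makes the compactness reduction go through in the infinite case. I would present the finite case first and then the compactness step, essentially as above.
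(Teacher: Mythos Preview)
Your proposal is correct and follows exactly the approach the paper sketches: apply Theorem~\ref{theo:LLL} over the uniform probability space $k^X$ when $X$ is finite, then pass to arbitrary $X$ by noting that correctness is inherited by finite subfamilies and invoking compactness of $k^X$. The paper states this in two sentences and refers to~\cite{MR} for details; you have simply written those details out.
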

	
	For finite $X$, Corollary~\ref{corl:LLL} follows from Theorem~\ref{theo:LLL} by taking $\Omega$ to be the set $k^X$ equipped with the uniform probability measure (see, e.g., \cite[41]{MR} for more details). The infinite case is derived from the finite one via a straightforward compactness argument.
	
	Recently, Moser and Tardos~\cite{MT} developed an algorithmic approach to the~LLL that has led to a large amount of work concerning various effective versions of the~LLL. A~salient example is the \emph{computable} version of the~LLL due to Rumyantsev and Shen~\cite{RSh}. In another direction, several \emph{measurable} versions of the~LLL have been established \cite{Bernshteyn, CGMPT}. Here we will use a measurable version of the~LLL for group actions from \cite{Bernshteyn}.
	
	Suppose that $\Phi \subseteq \finf{\G}{k}$. Then we have $\Forb(\Phi) \,=\, k^\G \setminus \bigcup_{\phi \in \Phi} U_\phi$. In particular, if the set $\Phi$ is shift-invariant, then $\Forb(\Phi)$ is a subshift. Let $\alpha \colon \G \acts (X, \mu)$ be a measure-preserving action of $\G$ on a probability space $(X, \mu)$. Given a shift-invariant set $\Phi \subseteq \finf{\G}{k}$, a \emphd{measurable solution} to $\Phi$ over $\alpha$ is a measurable function $f \colon X \to k$ such that for $\mu$-almost all $x \in X$, the map
	\[
		\pi_f(x) \colon \G \to k \colon \gamma \mapsto f(\gamma \cdot x)
	\]
	belongs to $\Forb(\Phi)$.
	
	\begin{theo}[{\cite[Corollary~6.7]{Bernshteyn}}]\label{theo:meas_LLL}
		Let $\Phi \subseteq \finf{\G}{k}$ be a correct shift-invariant set. Then the shift action $\G \acts ([0;1]^\G, \lambda^\G)$ admits a measurable solution to $\Phi$.
	\end{theo}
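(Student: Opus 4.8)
The plan is to prove this by a \emph{measurable implementation of the Moser--Tardos algorithm}, feeding it the randomness supplied by the point $x \in [0;1]^\G$. First I would fix a Borel isomorphism $[0;1] \cong k^{\N} \times [0;1]$ carrying $\lambda$ to the product of the uniform measure on $k^\N$ with $\lambda$; applying it at every coordinate, and noting that this commutes with the shift, we may regard each $x \in [0;1]^\G$ as providing, at each vertex $\gamma \in \G$, an i.i.d.\ ``resampling table'' $(R^i_\gamma)_{i \in \N}$ of uniform $k$-valued samples together with an independent uniform ``label'' $\ell_\gamma \in [0;1]$, all depending on $x$ in a Borel, $\G$-equivariant way. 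Under this identification the uniform measure $\nu^\G$ on $k^\G$ is a factor of $\lambda^\G$, a pattern $\phi \in \Phi$ corresponds to the bad event $U_\phi$ of probability $k^{-|\phi|}$ depending only on the coordinates in $\dom(\phi)$, two such events are stochastically dependent precisely when their patterns lie in each other's $\Nbhd(\cdot,\Phi)$, and the correctness of $\Phi$ witnessed by $\omega$ is exactly the hypothesis required to run Moser--Tardos.

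I would then define the process over rounds $t = 0, 1, 2, \dots$ Let $F_0(x) \colon \G \to k$ be given by $F_0(x)(\gamma) \defeq R^0_\gamma$, with all counters set to $0$. Given the coloring $F_t(x)$ and counters after round $t$, call $\psi \in \Phi$ \emph{violated} if $F_t(x) \supseteq \psi$ on $\dom(\psi)$, and choose, in a Borel and $\G$-equivariant fashion using the labels $\ell$, a \emph{maximal} family $\mathscr{I}_t$ of violated patterns with pairwise disjoint domains; for every vertex $\gamma$ lying in the (unique) member of $\mathscr{I}_t$ through it, advance its counter by $1$ and reset $F_{t+1}(x)(\gamma)$ to the corresponding next entry of the table $(R^i_\gamma)_i$, leaving the remaining vertices untouched. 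Let $f \colon [0;1]^\G \to k$ send $x$ to the eventual color of the identity vertex if it stabilizes, and to $0$ otherwise. Since every step is governed by local rules applied to the randomness carried by $x$, the map $f$ is Borel and the process is $\G$-equivariant, so $\pi_f(x) \colon \gamma \mapsto f(\gamma \cdot x)$ equals $\lim_t F_t(x)$ whenever this limit exists coordinatewise.

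The substance of the proof is then to show that $\pi_f(x) \in \Forb(\Phi)$ for $\lambda^\G$-almost every $x$, for which it suffices that a.s.\ every vertex is resampled only finitely often. Since $k^{-|\phi|} \leq \omega(\phi) \prod_{\psi \in \Nbhd(\phi,\Phi)} (1-\omega(\psi))$ has positive left-hand side, every product $\prod_{\psi \in \Nbhd(\phi,\Phi)}(1-\omega(\psi))$ is positive; as $\set{\psi \in \Phi : \gamma \in \dom(\psi)} \setminus \set{\phi} \subseteq \Nbhd(\phi,\Phi)$ whenever $\gamma \in \dom(\phi)$, this forces $\sum_{\psi \,:\, \gamma \in \dom(\psi)} \omega(\psi) < \infty$, and hence $\sum_{\psi \,:\, \gamma \in \dom(\psi)} \tfrac{\omega(\psi)}{1-\omega(\psi)} < \infty$ as well. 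The Moser--Tardos witness-tree estimate --- which is local and indifferent to execution order, hence applies to this maximal-disjoint-family parallel execution on an infinite instance --- bounds the expected number of resamplings of each bad event $U_\phi$ by $\tfrac{\omega(\phi)}{1-\omega(\phi)}$, so the expected number of resamplings of a vertex $\gamma$ is at most $\sum_{\phi \,:\, \gamma \in \dom(\phi)} \tfrac{\omega(\phi)}{1-\omega(\phi)} < \infty$; thus $\gamma$ is resampled finitely often a.s., and a countable union over $\gamma \in \G$ makes every vertex stabilize a.s. On the conull set where this happens, if $\pi_f(x) \supseteq \phi$ for some $\phi \in \Phi$, then $\phi$ is violated at all large $t$, so by maximality of $\mathscr{I}_t$ some vertex of $\dom(\phi)$ is resampled at round $t$, contradicting stabilization; hence $\pi_f(x) \in \Forb(\Phi)$ a.e., i.e.\ $f$ is a measurable solution.

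The main obstacle is carrying out the equivariant, measurable selection of the maximal disjoint families $\mathscr{I}_t$ (and the associated Moser--Tardos bookkeeping on an infinite, possibly unbounded-degree instance). This is feasible because the graph on the \emph{violated} patterns is a.e.\ locally finite: conditionally on the history up to round $t$ the current color of each site is a fresh uniform sample independent of the rest, so the expected number of violated patterns through a fixed vertex at round $t$ is $\sum_{\phi \,:\, \gamma \in \dom(\phi)} k^{-|\phi|} \leq \sum_{\phi \,:\, \gamma \in \dom(\phi)} \omega(\phi) < \infty$; on the resulting conull, locally finite part one builds $\mathscr{I}_t$ by the usual label-driven greedy procedure of measurable combinatorics. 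With that in hand, the factorization of $\lambda^\G$, the equivariance of $f$, and the Moser--Tardos estimate are all routine.
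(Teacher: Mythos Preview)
The paper does not prove this theorem; it is quoted from \cite{Bernshteyn} and the paper only remarks that it is a special case of a general measurable LLL established there. Your overall architecture---feeding the Bernoulli randomness of $[0;1]^\G$ into a parallel Moser--Tardos resampling process run equivariantly over $\G$---is precisely the strategy behind that result and the related work in \cite{CGMPT}, so at the level of plan there is nothing to compare: you are reconstructing the cited proof rather than offering an alternative.

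There is, however, a real gap. The sentence ``conditionally on the history up to round $t$ the current color of each site is a fresh uniform sample independent of the rest'' is false. Take a single constraint $\phi$ with $\dom(\phi)=\set{\gamma}$ and $\phi(\gamma)=0$: conditionally on the history ``$\gamma$ was not resampled at round $0$,'' the color $F_1(x)(\gamma)$ equals $R^0_\gamma$ and is uniform on $\set{1,\ldots,k-1}$, not on $k$. In general, whenever a vertex is \emph{not} resampled at round $t-1$, its current color participated in determining $\mathscr{I}_{t-1}$ and is therefore biased given the history. You invoke this conditional independence only to argue that the graph of \emph{violated} patterns is a.e.\ locally finite at every round, which is what makes your label-driven greedy selection of a maximal disjoint family $\mathscr{I}_t$ well-defined and equivariant; once the claim fails, the construction may become ill-defined after round $0$, and without equivariance you cannot identify $\pi_f(x)$ with $\lim_t F_t(x)$.

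The standard repair is not to argue local finiteness directly at each round but to pass through an exhaustion: run the Moser--Tardos process for each finite (or locally finite) truncation $\Phi_n \uparrow \Phi$, where the selection of $\mathscr{I}_t$ poses no difficulty, and then use the witness-tree bound $\mathbb{E}[\text{resamplings of }\phi] \leq \omega(\phi)/(1-\omega(\phi))$---which you correctly observe is summable over $\set{\phi:\gamma\in\dom(\phi)}$---to show that the resulting colorings converge as $n\to\infty$ to a measurable solution for $\Phi$. Your sketch becomes correct once this truncation-and-limit step replaces the faulty conditional-independence assertion; everything else (the factorization $[0;1]\cong k^\N\times[0;1]$, the summability deductions from correctness, the stabilization argument via maximality) is sound.
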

	
	\begin{corl}\label{corl:measure}
		Let $\Phi \subseteq \finf{\G}{k}$ be a correct shift-invariant set. Then there exist an invariant probability measure $\mu$ on $\Forb(\Phi)$ and a factor map \[\pi \colon ([0;1]^\G, \lambda^\G) \to (\Forb(\Phi), \mu).\]
	\end{corl}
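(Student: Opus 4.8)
The plan is to deduce this immediately from the measurable version of the Lovász Local Lemma, Theorem~\ref{theo:meas_LLL}. Applying that theorem to the correct shift-invariant set $\Phi$, we obtain a measurable function $f \colon [0;1]^\G \to k$ that is a measurable solution to $\Phi$ over the shift action $\G \acts ([0;1]^\G, \lambda^\G)$; in other words, for $\lambda^\G$-almost every $x$, the point $\pi_f(x) \in k^\G$ lies in $\Forb(\Phi)$. The remaining work is just to package $\pi_f$ as the desired factor map and to produce $\mu$ as a pushforward.

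First I would record the basic properties of $\pi_f \colon [0;1]^\G \to k^\G$. It is Borel measurable, since for each $\gamma \in \G$ the coordinate $x \mapsto \pi_f(x)(\gamma) = f(\gamma \cdot x)$ is the composition of the continuous shift by $\gamma$ with the measurable function $f$, and the Borel structure on $k^\G$ is generated by the coordinate projections. It is also equivariant: for all $\gamma$, $\delta \in \G$,
\[
	(\gamma \cdot \pi_f(x))(\delta) \,=\, \pi_f(x)(\delta\gamma) \,=\, f(\delta\gamma \cdot x) \,=\, f(\delta \cdot (\gamma \cdot x)) \,=\, \pi_f(\gamma \cdot x)(\delta),
\]
so $\pi_f(\gamma \cdot x) = \gamma \cdot \pi_f(x)$ for all $\gamma \in \G$. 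Since $\Forb(\Phi)$ is a subshift, hence shift-invariant, equivariance of $\pi_f$ shows that the set $Y \defeq \set{x \in [0;1]^\G \,:\, \pi_f(x) \in \Forb(\Phi)}$ is shift-invariant, and by the measurable-solution property it is $\lambda^\G$-conull.

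Finally I would set $\mu \defeq (\pi_f)_\ast(\lambda^\G)$, the pushforward Borel probability measure on $k^\G$. Because $\pi_f(Y) \subseteq \Forb(\Phi)$ and $\lambda^\G(Y) = 1$, the measure $\mu$ is concentrated on $\Forb(\Phi)$ and may be regarded as a probability measure on $\Forb(\Phi)$. It is shift-invariant: for $\gamma \in \G$ and Borel $A \subseteq k^\G$, equivariance of $\pi_f$ gives $\pi_f^{-1}(\gamma \cdot A) = \gamma \cdot \pi_f^{-1}(A)$, and invariance of $\lambda^\G$ under the shift then yields $\mu(\gamma \cdot A) = \lambda^\G(\gamma \cdot \pi_f^{-1}(A)) = \lambda^\G(\pi_f^{-1}(A)) = \mu(A)$. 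Thus $\pi \defeq \pi_f$, restricted to the invariant conull set $Y$, is a measure-preserving equivariant map $([0;1]^\G, \lambda^\G) \to (\Forb(\Phi), \mu)$, i.e., the required factor map. There is no real obstacle in this argument; the only point needing a little care is the measure-theoretic bookkeeping that ensures $\mu$ genuinely lives on $\Forb(\Phi)$ and that $\pi_f$, being only \emph{almost} everywhere valued in $\Forb(\Phi)$, nonetheless qualifies as a factor map — which is exactly what passing to the conull invariant set $Y$ accomplishes.
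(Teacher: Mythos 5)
Your argument is correct and is essentially identical to the paper's proof, which likewise invokes Theorem~\ref{theo:meas_LLL} to obtain a measurable solution $f$ and then sets $\pi \defeq \pi_f$ and $\mu \defeq (\pi_f)_\ast(\lambda^\G)$. The only difference is that you spell out the routine verifications (measurability, equivariance, and restriction to the conull invariant set) that the paper leaves implicit.
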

	\begin{proof}[\textsc{Proof}]
		Let $f \colon [0;1]^\G \to k$ be a measurable solution to $\Phi$ given by Theorem~\ref{theo:meas_LLL}. We may then take $\pi \defeq \pi_f$ and $\mu \defeq (\pi_f)_\ast(\lambda^\G)$.
	\end{proof}
	
	Theorem~\ref{theo:meas_LLL} is a special case of \cite[Theorem~6.6]{Bernshteyn}, whose full statement is somewhat technical and will not be needed here. Roughly speaking, \cite[Theorem~6.6]{Bernshteyn} asserts that any combinatorial argument that proceeds via a series of iterative applications of the~LLL can be preformed in a measurable fashion over the shift action $\G \acts ([0;1]^\G; \lambda^\G)$.
	
	\subsection{{Proof of Lemma~\ref{lemma:using_LLL}}}\label{subsec:using_LLL}
	
	\begin{claim}\label{claim:breadth_LLL}
		Let $\U$ be a family of basic open sets and let $h \in (0;+\infty)$ be such that
		\begin{equation}\label{eq:bound0}
			h + \sigma_h(\U) < \log_2 k.
		\end{equation}
		Let $\Phi \subseteq \finf{\G}{k}$ be the set such that $\U = \set{U_\phi \,:\, \phi \in \Phi}$. For each $\phi \in \G \cdot \Phi$, define
		\[
		\omega(\phi) \,\defeq\, 2^{-h|\phi|}.
		\]
		Then $\omega$ is a witness to the correctness of $\G \cdot \Phi$.
	\end{claim}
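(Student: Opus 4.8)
The plan is to verify the defining inequality of correctness directly, after passing to base-$2$ logarithms. I would begin by recording the elementary facts that every $\phi \in \G \cdot \Phi$ has $|\phi| \geq 1$, $\d(U_\phi) = 2^{-|\phi|}$, and hence $\omega(\phi) = 2^{-h|\phi|} = \d(U_\phi)^h \in (0;1)$ (here we use $h > 0$), and that $|\phi|$, $\d(U_\phi)$, $\omega(\phi)$, and $\sigma_h(U_\phi)$ all depend only on the cardinality $|\phi|$ and so are invariant under the shift action. Since \eqref{eq:bound0} forces $\sigma_h(\U) < \infty$ and $\log_2 k - h > 0$, all quantities below are finite. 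Taking $\log_2$ of both sides and rearranging, the inequality $k^{-|\phi|} \leq \omega(\phi) \prod_{\psi \in \Nbhd(\phi,\,\G\cdot\Phi)} (1 - \omega(\psi))$ that we must establish for each $\phi \in \G\cdot\Phi$ is equivalent to
\[
-\sum_{\psi \in \Nbhd(\phi,\,\G\cdot\Phi)} \log_2\bigl(1 - \d(U_\psi)^h\bigr) \,\leq\, |\phi|\,(\log_2 k - h).
\]

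To bound the left-hand side, note that every $\psi \in \Nbhd(\phi, \G\cdot\Phi)$ meets $\dom(\phi)$, so
\[
\Nbhd(\phi, \G\cdot\Phi) \,\subseteq\, \bigcup_{g \in \dom(\phi)} \set{\psi \in \G\cdot\Phi \,:\, g \in \dom(\psi)}.
\]
As every summand $-\log_2(1 - \d(U_\psi)^h)$ is positive, it therefore suffices to prove that for each fixed $g \in \G$ one has $\sum_{\psi \in \G\cdot\Phi,\, g \in \dom(\psi)} \bigl(-\log_2(1 - \d(U_\psi)^h)\bigr) \leq \sigma_h(\U)$; summing this over the $|\phi|$ elements $g \in \dom(\phi)$ and using $\sigma_h(\U) \leq \log_2 k - h$ then yields the displayed inequality. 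This inner estimate is the heart of the proof. To obtain it, observe that every $\psi \in \G\cdot\Phi$ with $g \in \dom(\psi)$ can be written as $\gamma \cdot \psi_0$ with $\psi_0 \in \Phi$ and $\gamma \in g^{-1}\dom(\psi_0)$, a set of exactly $|\psi_0|$ elements, and that $\d(U_{\gamma \cdot \psi_0}) = \d(U_{\psi_0})$ for all such $\gamma$. Summing over $\psi_0 \in \Phi$ — which may count a given $\psi$ several times, but only inflates a sum of positive terms — gives
\[
\sum_{\substack{\psi \in \G\cdot\Phi \\ g \in \dom(\psi)}} \bigl(-\log_2(1 - \d(U_\psi)^h)\bigr) \,\leq\, \sum_{\psi_0 \in \Phi} |\psi_0| \cdot \bigl(-\log_2(1 - \d(U_{\psi_0})^h)\bigr) \,=\, \sum_{\psi_0 \in \Phi} \sigma_h(U_{\psi_0}) \,=\, \sigma_h(\U),
\]
where the middle equality uses $|\psi_0| = -\log_2 \d(U_{\psi_0})$ and the definition $\sigma_h(U) = \log_2 \d(U) \cdot \log_2(1 - \d(U)^h)$.

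I do not anticipate a real obstacle; the computation is short once the neighborhood of $\phi$ is split according to which elements of $\dom(\phi)$ a given $\psi$ touches. The points that call for a little care are purely bookkeeping: that $\sigma_h(U_\psi)$ is a shift-invariant function of $\psi$, so translating $\psi_0$ leaves its contribution unchanged; that the passage to logarithms is valid because $\sum_{\psi} \omega(\psi) \leq \sum_{\psi}\bigl(-\log_2(1-\omega(\psi))\bigr)$ is finite (by the estimate above), so the possibly-infinite product $\prod_{\psi \in \Nbhd(\phi,\,\G\cdot\Phi)}(1 - \omega(\psi))$ converges to a strictly positive value; and that the harmless overcounting of pairs $(\psi_0, \gamma)$ producing the same $\psi$ only strengthens an upper bound for a sum of nonnegative terms. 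Combining these, $\omega$ satisfies the required inequality for every $\phi \in \G\cdot\Phi$, i.e., it is a witness to the correctness of $\G\cdot\Phi$.
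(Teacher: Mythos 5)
Your proof is correct and follows essentially the same route as the paper: both amount to the same double-counting bound on $\Nbhd(\phi,\G\cdot\Phi)$ (you split the neighborhood by which $g\in\dom(\phi)$ each $\psi$ meets, while the paper counts, for each $\psi\in\Phi$, at most $|\phi||\psi|$ translates $\delta\cdot\psi$ landing in the neighborhood --- the same count) and then recognize the resulting logarithmic sum as $|\phi|\,\sigma_h(\U)$, reducing the verification to exactly the hypothesis $h+\sigma_h(\U)<\log_2 k$. Your remark that finiteness of $\sigma_h(\U)$ guarantees the possibly infinite product $\prod_{\psi}(1-\omega(\psi))$ converges to a strictly positive value is a correct observation that the paper leaves implicit.
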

	\begin{proof}[\textsc{Proof}]
		Since $\omega$ is invariant under the shift action $\G \acts \G \cdot \Phi$, we only have to verify that
		\[
		k^{-|\phi|} \,\leq\, \omega(\phi) \prod_{\psi \in \Nbhd(\phi, \G \cdot \Phi)} (1 - \omega(\psi)) \qquad \text{for all } \phi \in \Phi.
		\]
		Let $\phi \in \Phi$. By definition, $\Nbhd(\phi, \G \cdot \Phi)$ is the set of all products of the form $\delta \cdot \psi$, where $\psi \in \Phi$ and $\delta \in \G$, with the property that $\dom(\phi) \cap \dom(\delta \cdot \psi) \neq \0$. This is equivalent to $\delta \in \dom(\phi)^{-1} \dom(\psi)$, so, for each choice of $\psi \in \Phi$, there are at most $|\dom(\phi)^{-1} \dom(\psi)| \leq |\phi||\psi|$ possible choices for $\delta \in \G$. Using this observation together with the shift-invariance of $\omega$, we obtain
		\[
		\omega(\phi) \prod_{\psi \in \Nbhd(\phi, \G \cdot \Phi)} (1 - \omega(\psi)) \,\geq\, \omega(\phi) \prod_{\psi \in \Phi} (1 - \omega(\psi))^{|\phi||\psi|}.
		\]
		It remains to show that
		\begin{equation}\label{eq:bound1}
		k^{-|\phi|}\,\leq\, \omega(\phi) \prod_{\psi \in \Phi} (1 - \omega(\psi))^{|\phi||\psi|}.
		\end{equation}
		Plugging the definition of $\omega$ into \eqref{eq:bound1}, we get
		\[
			k^{-|\phi|}\,\leq\, 2^{-h|\phi|} \prod_{\psi \in \Phi} (1 - 2^{-h|\psi|})^{|\phi||\psi|},
		\]
		which is equivalent to
		\[
		k^{-1} \,\leq\, 2^{-h} \prod_{\psi \in \Phi}(1-2^{-h|\psi|})^{|\psi|}.
		\]
		Taking the logarithm on both sides turns the last inequality into
		\[
		\log_2k \,\geq\, h - \sum_{\psi \in \Phi} |\psi| \cdot \log_2(1 - 2^{-h|\psi|}).
		\]
		But $|\psi| = - \log_2(\d(U_\psi))$ and $2^{-h|\psi|} = \d(U_\psi)^h$, so
		\[
		- \sum_{\psi \in \Phi} |\psi| \cdot \log_2(1 - 2^{-h|\psi|}) \,=\, \sum_{U \in \U} \log_2(\d(U)) \cdot \log_2(1-\d(U)^h) \,=\, \sigma_h(\U),
		\]
		and we are done by \eqref{eq:bound0}.
	\end{proof}
	
	Let $X \subseteq k^\G$ be a subshift with $\b(X) > 0$ and consider any action-cover $\U$ of $k^\G \setminus X$ with $\b(\U) > 0$. Let $h \in (0;+\infty)$ be such that $h + \sigma_h(\U) < \log_2k$. Note that $\U$ and $h$ can be chosen so that $h$ is as close to $\b(X)$ as desired. Let $\Phi \subseteq \finf{\G}{k}$ be the set such that $\U = \set{U_\phi \,:\, \phi \in \Phi}$. According to Claim~\ref{claim:breadth_LLL}, the set $\G \cdot \Phi$ is correct for the~LLL. Corollary~\ref{corl:LLL} then implies that $\Forb(\G \cdot \Phi) \neq \0$; furthermore, according to Corollary~\ref{corl:measure}, there exist an invariant probability measure $\mu$ on $\Forb(\G \cdot \Phi)$ and a factor map $\pi \colon ([0;1]^\G, \lambda^\G) \to (\Forb(\G \cdot \Phi), \mu)$. Since $\Forb(\G \cdot \Phi) = k^\G \setminus \bigcup (\G \cdot \U) \subseteq X$, we conclude that $X \neq \0$ and part \ref{item:lemma:factor} of Lemma~\ref{lemma:using_LLL} holds.
	
	It remains to verify that if $\G$ is sofic, then the entropy of $X$ with respect to any sofic approximation is at least $\b(X)$. In fact, we will show that the entropy of $\Forb(\G \cdot \Phi)$ is at least $h$, which will yield the desired result as $\Forb(\G \cdot \Phi) \subseteq X$ and $h$ can be made arbitrarily close to $\b(X)$. The idea is simple: Given a pseudo-action $\alpha \colon \G \pacts V$ on a finite set $V$, we ``copy'' $\G \cdot \Phi$ over to $V$ and build a set $\Phi_\alpha \subseteq \finf{V}{k}$ such that every map in $\Forb(\Phi_\alpha)$ is an approximate $\Forb(\G \cdot \Phi)$-coloring of $\alpha$; then we apply the LLL to obtain a lower bound on $|\Forb(\Phi_\alpha)|$. In the remainder of the proof, we work out the technical details of this approach.

	Let $\epsilon > 0$ and let $F \in \fins{\G}\setminus \set{\0}$. Recall that for a subshift $Y \subseteq k^\G$, the set $Y_F$ is defined by
	\[
	Y_F \defeq \set{\phi \in k^F \,:\, Y \cap U_\phi \neq \0}= \set{\rest{y}{F} \,:\, y \in Y}.
	\]
	By compactness, we can find a finite set $S \in \fins{\G}$ such that
	\begin{equation}\label{eq:compactness}
		\Forb(\G \cdot \Phi)_F \,=\, \Forb(S \cdot (\Phi \cap \finf{S}{k}))_F.
	\end{equation}
	We may assume that the set $S$ is symmetric and contains $\mathbf{1}$. For each $n \in \N$, let
	\[
		S^n \defeq \set{\gamma_1 \cdots \gamma_n \,:\, \gamma_1, \ldots, \gamma_n \in S}.
	\]
	Let $\alpha \colon \G \pacts V$ be an $(\epsilon, S^4)$-faithful pseudo-action of $\G$ on a finite set $V$. We will show that \[h_{\epsilon, F}(\Forb(\G \cdot \Phi), \alpha) \geq h.\] For each $\phi \in \finf{\G}{k}$ and $v \in \Prop_{\dom(\phi)} (\alpha)$, define the map $\phi_v \in \finf{V}{k}$ by
	\[
		\dom(\phi_v) \defeq \dom(\phi) \cdot v \qquad \text{and} \qquad \phi_v(\gamma \cdot v) \defeq \phi(\gamma) \text{ for all }\gamma \in \dom(\phi),
	\]
	and let
	\[
		\Phi_\alpha \defeq \set{\phi_v \,:\, \phi \in \Phi \cap \finf{S}{k},\, v \in \Prop_{S^3}(\alpha)}.
	\]
	
	\begin{claim}\label{claim:col}
		We have $\Forb(\Phi_\alpha) \subseteq \Col \epsilon F \alpha {\Forb(\G \cdot \Phi)}$.
	\end{claim}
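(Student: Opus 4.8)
The plan is to deduce the inclusion from the $(\epsilon, S^4)$-faithfulness of $\alpha$: it suffices to show that $\rest{\pi_f(v)}{F} \in \Forb(\G \cdot \Phi)_F$ holds for \emph{every} $f \in \Forb(\Phi_\alpha)$ and every $v \in \Prop_{S^4}(\alpha)$, because $|\Prop_{S^4}(\alpha)| \geq (1 - \epsilon)|V|$ then forces $f \in \Col{\epsilon}{F}{\alpha}{\Forb(\G \cdot \Phi)}$ by the very definition of $\Col{\epsilon}{F}{\alpha}{\cdot}$. In fact I would prove the slightly stronger statement that, for such $f$ and $v$, the point $x \defeq \pi_f(v)$ itself lies in $\Forb(S \cdot (\Phi \cap \finf{S}{k}))$; once this is known, $\rest{x}{F} \in \Forb(\G \cdot \Phi)_F$ is immediate from the choice of $S$ in \eqref{eq:compactness}.

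To prove $x \in \Forb(S \cdot (\Phi \cap \finf{S}{k}))$, I would argue by contradiction, assuming $x \supseteq \psi$ for some $\psi = \delta \cdot \phi$ with $\delta \in S$ and $\phi \in \Phi \cap \finf{S}{k}$, and setting $w \defeq \delta^{-1} \cdot v$. The first step is to verify that $w \in \Prop_{S^3}(\alpha)$; since $\dom(\phi) \subseteq S \subseteq S^3$, this makes $\phi_w$ well defined and, by construction of $\Phi_\alpha$, puts $\phi_w \in \Phi_\alpha$. The second step transports the assumption $x \supseteq \delta \cdot \phi$ over to $f$: for each $\gamma \in \dom(\phi)$ the element $v$ is $\set{\delta^{-1}, \gamma, \gamma\delta^{-1}}$-proper (all three group elements lie in $S^2 \subseteq S^4$), so the compatibility identity $(\delta^{-1} \cdot \pi_f(v))(\gamma) = \pi_f(\delta^{-1} \cdot v)(\gamma)$ gives $f(\gamma \cdot w) = x(\gamma\delta^{-1}) = \phi(\gamma) = \phi_w(\gamma \cdot w)$; hence $f \supseteq \phi_w \in \Phi_\alpha$, contradicting $f \in \Forb(\Phi_\alpha)$.

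The one step that requires actual work — and the only place where the exponent $4$ and the normalization of $S$ (symmetric, containing $\mathbf{1}$) enter — is the auxiliary fact that translating an $S^n$-proper element by an element of $S$ yields an $S^{n-1}$-proper element; applied with $n = 4$ and the translate $\delta^{-1}$ this gives $w \in \Prop_{S^3}(\alpha)$. This is a routine check of the identity, equivariance, and freeness axioms on $S^3$, throughout which one rewrites the relevant images using the $S^4$-equivariance and $S^4$-freeness of $v$, the point being that any product of at most three elements of $S$ followed by $\delta^{-1} \in S$ stays inside $S^4$. Everything else is a direct unwinding of the definitions of $\phi_w$, $\Phi_\alpha$, and $\Forb(\cdot)$.
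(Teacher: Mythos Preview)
Your proposal is correct and follows essentially the same route as the paper's proof: both reduce to showing that $\pi_f(v)\in\Forb(S\cdot(\Phi\cap\finf{S}{k}))$ for $v\in\Prop_{S^4}(\alpha)$, use \eqref{eq:compactness}, and invoke the $(\epsilon,S^4)$-faithfulness of $\alpha$. The paper compresses the key step into the single assertion ``$S\cdot v\subseteq\Prop_{S^3}(\alpha)$, and therefore $\pi_f(v)\in\Forb(S\cdot(\Phi\cap\finf{S}{k}))$,'' whereas you spell out both the auxiliary propriety fact and the contradiction $f\supseteq\phi_w$ explicitly; but the underlying argument is the same.
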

	\begin{proof}[\textsc{Proof}]
	Let $f \in \Forb(\Phi_\alpha)$. Note that for any $v \in \Prop_{S^4}(\alpha)$, we have $S \cdot v \subseteq \Prop_{S^3}(\alpha)$, and therefore $\pi_f(v) \in \Forb(S \cdot (\Phi \cap \finf{S}{k}))$. From \eqref{eq:compactness} we conclude
	\[
		|\set{v \in V \,:\, \rest{\pi_f(v)}{F} \in \Forb(\G \cdot \Phi)_F}| \,\geq\,|\Prop_{S^4}(\alpha)| \,\geq\, (1-\epsilon)|V|. \qedhere
	\]
	\end{proof}
	
	Recall that, according to Claim~\ref{claim:breadth_LLL}, the map
	\[
		\omega \colon \G \cdot \Phi \to [0;1) \colon \phi \mapsto 2^{-h|\phi|}
	\]
	is a witness to the correctness of $\G \cdot \Phi$. Define
	\[
		\omega_\alpha \colon \Phi_\alpha \to [0;1) \colon \psi \mapsto 2^{-h|\psi|}.
	\]
	
	\begin{claim}\label{claim:tau}
	 The map $\omega_\alpha$ is a witness to the correctness of $\Phi_\alpha$.
	\end{claim}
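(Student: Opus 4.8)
The plan is to verify directly the defining inequality of an LLL-witness for $\Phi_\alpha$: for every $\psi_0 \in \Phi_\alpha$,
\[
k^{-|\psi_0|} \ \leq\ \omega_\alpha(\psi_0) \prod_{\psi \in \Nbhd(\psi_0, \Phi_\alpha)} (1 - \omega_\alpha(\psi)),
\]
obtaining it by transporting the bound \eqref{eq:bound1}---already established for $\G \cdot \Phi$ inside the proof of Claim~\ref{claim:breadth_LLL}---from the group to the pseudo-action. Write $\psi_0 = \phi_v$ with $\phi \in \Phi \cap \finf{S}{k}$ and $v \in \Prop_{S^3}(\alpha)$. Since $\dom(\phi) \subseteq S \subseteq S^3$, the $S^3$-freeness clause of $S^3$-properness makes $\gamma \mapsto \gamma \cdot v$ injective on $\dom(\phi)$, so $\phi_v$ is a well-defined element of $\finf{V}{k}$ with $|\phi_v| = |\phi| \geq 1$; hence $\omega_\alpha(\phi_v) = 2^{-h|\phi|} = \omega(\phi) \in (0;1)$ and $k^{-|\phi_v|} = k^{-|\phi|}$. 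The same observation applied to any $\chi \in \Phi \cap \finf{S}{k}$ and $w \in \Prop_{S^3}(\alpha)$ gives $\omega_\alpha(\chi_w) = \omega(\chi)$. Thus it remains to bound $\prod_{\psi \in \Nbhd(\phi_v, \Phi_\alpha)}(1 - \omega_\alpha(\psi))$ below by $\prod_{\chi \in \Phi}(1 - \omega(\chi))^{|\phi||\chi|}$, after which \eqref{eq:bound1} finishes the job.

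The main content is a counting estimate on the neighbourhood, mirroring the one in Claim~\ref{claim:breadth_LLL}. For each $\chi \in \Phi \cap \finf{S}{k}$ I would set $\widehat N_\chi \defeq \set{w \in \Prop_{S^3}(\alpha) : \chi_w \in \Nbhd(\phi_v, \Phi_\alpha)}$ and prove $|\widehat N_\chi| \leq |\phi| |\chi|$. If $w \in \widehat N_\chi$, then $\dom(\phi) \cdot v$ meets $\dom(\chi) \cdot w$, so $\gamma \cdot v = \delta \cdot w$ for some $\gamma \in \dom(\phi)$, $\delta \in \dom(\chi)$; applying $\delta^{-1} \cdot (-)$ to both sides and using the identity and $S^3$-equivariance clauses of $S^3$-properness for $w$ and for $v$---legitimate because $\delta^{-1}$, $\gamma$, $\delta$ and $\delta^{-1}\gamma$ all lie in $S^2 \subseteq S^3$---one gets $w = (\delta^{-1}\gamma) \cdot v$. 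Picking one witnessing pair $(\gamma, \delta)$ per $w$ then embeds $\widehat N_\chi$ into $\dom(\chi)^{-1}\dom(\phi)$, which has at most $|\chi||\phi|$ elements. Since every element of $\Nbhd(\phi_v, \Phi_\alpha)$ has the form $\chi_w$ with $\chi \in \Phi \cap \finf{S}{k}$ and $w \in \widehat N_\chi$, and each factor $1 - \omega_\alpha(\chi_w) = 1 - \omega(\chi)$ lies in $(0;1)$, throwing in the (possibly repeated) factors over all such pairs only decreases the product, giving
\[
\prod_{\psi \in \Nbhd(\phi_v, \Phi_\alpha)}(1 - \omega_\alpha(\psi)) \ \geq\ \prod_{\chi \in \Phi \cap \finf{S}{k}}(1 - \omega(\chi))^{|\widehat N_\chi|} \ \geq\ \prod_{\chi \in \Phi}(1 - \omega(\chi))^{|\phi||\chi|}.
\]
Multiplying through by $\omega_\alpha(\phi_v) = \omega(\phi)$ and applying \eqref{eq:bound1} (which holds because $h + \sigma_h(\U) < \log_2 k$) yields $k^{-|\phi_v|} \leq \omega_\alpha(\phi_v) \prod_{\psi \in \Nbhd(\phi_v, \Phi_\alpha)}(1 - \omega_\alpha(\psi))$, completing the verification.

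The one genuinely delicate step is the identity $w = (\delta^{-1}\gamma) \cdot v$: this is exactly the place where $\alpha$ being only a pseudo-action---not an honest action---could break the argument, so the proof must keep careful track of which group products it multiplies and check that they all stay within the ``proper window'' $S^3$ used to define $\Phi_\alpha$. (The slightly larger window $S^4$ in the faithfulness hypothesis provides the extra room consumed in Claim~\ref{claim:col}, not here.) Everything else is routine bookkeeping with $\omega(\phi) = 2^{-h|\phi|}$ and the inequalities already in hand.
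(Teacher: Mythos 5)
Your proposal is correct and takes essentially the same approach as the paper. The key identity you isolate---that $\gamma \cdot v = \delta \cdot w$ forces $w = (\delta^{-1}\gamma)\cdot v$ via $S^3$-equivariance of $v$ and $w$---is exactly what the paper uses to show that its map $\iota(\psi_u) \defeq (\gamma^{-1}\delta)\cdot\psi$ is invertible (namely, $\psi_u = (\iota(\psi_u))_v$); the only presentational difference is that the paper packages the count as a size-preserving injection $\Nbhd(\phi_v,\Phi_\alpha)\hookrightarrow\Nbhd(\phi,\G\cdot\Phi)$ and then cites Claim~\ref{claim:breadth_LLL} as a black box, whereas you re-run the $|\phi||\chi|$ counting estimate and plug directly into \eqref{eq:bound1}.
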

	\begin{proof}[\textsc{Proof}]
		Consider any $v \in \Prop_{S^3}(\alpha)$ and $\phi \in \Phi \cap \finf{S}{k}$. We will define an injective map \[\iota \colon \Nbhd(\phi_v, \Phi_\alpha) \to \Nbhd(\phi, \G \cdot \Phi)\] such that for all $\psi \in \Nbhd(\phi_v, \Phi_\alpha)$, we have $|\psi| = |\iota(\psi)|$. Since then we also have $\omega_\alpha(\psi) = \omega(\iota(\psi))$, the desired conclusion follows by Claim~\ref{claim:breadth_LLL}.
	
		Suppose that $\psi_u \in \Nbhd(\phi_v, \Phi_\alpha)$ for some $u \in \Prop_{S^3}(\alpha)$ and $\psi \in \Phi \cap \finf{S}{k}$. Choose arbitrary $\gamma \in \dom(\phi)$ and $\delta \in \dom(\psi)$ such that $\gamma \cdot v = \delta \cdot u$ and define
	\[
		\iota(\psi_u) \defeq (\gamma^{-1}\delta) \cdot \psi.
	\]
	Clearly, $\iota(\psi_u) \in \Nbhd(\phi, \G \cdot \Phi)$ since $\gamma \in \dom(\iota(\psi_u))$. Also, we have $|\psi_u| = |\psi| = |\iota(\psi_u)|$. Finally, the map $\iota$ is injective, since it is invertible:  $\psi_u = (\iota(\psi_u))_v$. Indeed, as $v$ and $u$ are both $S^3$-proper, for every $\zeta \in \dom(\psi)$, we have
	\[
		(\zeta \delta^{-1} \gamma) \cdot v \,=\, \zeta \cdot (\delta^{-1} \cdot (\gamma \cdot v)) \,=\, \zeta \cdot (\delta^{-1} \cdot (\delta \cdot u)) \,=\, \zeta \cdot u,
	\]
	so $\psi_u = (\iota(\psi_u))_v$, as claimed.
	\end{proof}
	
	From Claim~\ref{claim:tau} and Corollary~\ref{corl:LLL}, we obtain
	\begin{align*}
		|\Forb(\Phi_\alpha)| \,\geq \, k^{|V|} \prod_{\psi \in \Phi_\alpha} (1 - \omega_\alpha(\psi)) \,&\geq\, k^{|V|} \prod_{\phi \in \Phi \cap \finf{S}{k}} \, \prod_{v \in \Prop_{S^{3}}(\alpha)} (1 - \omega_\alpha(\phi_v)) \\
		&\geq\, k^{|V|} \prod_{\phi \in \Phi} (1 - 2^{-h|\phi|})^{|V|}.
	\end{align*}
	Therefore, by Claim~\ref{claim:col},
	\begin{align*}
		h_{\epsilon, F}(\Forb(\G \cdot \Phi), \alpha) \,&=\, \frac{\log_2|\Col \epsilon F \alpha {\Forb(\G \cdot \Phi)}|}{|V|} \\
		&\geq \, \frac{\log_2|\Forb(\Phi_\alpha)|}{|V|} \,\geq \, \log_2 k + \sum_{\phi \in \Phi} \log_2(1 - 2^{-h|\phi|}).
	\end{align*}
	But $2^{-h|\phi|} = \d(U_\phi)^h$ and $-\log_2(\d(U_\phi)) = |\phi| \geq 1$, so
	\begin{align*}
	\log_2 k + \sum_{\phi \in \Phi} \log_2(1 - 2^{-h|\phi|}) \,&=\, \log_2 k + \sum_{U \in \U} \log_2(1-\d(U)^h) \\
	&\geq\, \log_2k - \sum_{U \in \U} \log_2(\d(U)) \cdot \log_2(1-\d(U)^h) \\
	&=\, \log_2 k - \sigma_h(\U) \,>\, h,
	\end{align*}
	as desired.

	{\renewcommand{\markboth}[2]{}
		\printbibliography}

\end{document}